\newtheorem{Theo}{Theorem}
\newtheorem{Cor}[Theo]{Corollary}
\newtheorem{Lem}[Theo]{Lemma}
\theoremstyle{remark}
\newtheorem{remm}{Remark}
\def\E{\mathbb{E}}
\def\Z{\mathbb{Z}}
\def\CL{\mathcal{L}}
\def\epsilon{\varepsilon}
\newcommand{\indz}[1]{\,\mathbb{I}_{#1}}
\newcommand{\norm}[1]{\left\Vert #1\right\Vert}
\newcommand{\normdeux}[1]{\Vert #1\Vert_{2}}
\newcommand{\normsup}[1]{\Vert #1\Vert_{\infty}}
\def\tops{\xrightarrow[]{a.s.}}
\def\toloi{\xrightarrow[]{\CL}}
\def\var{\mathrm{Var}}
\def\cov{\mathrm{Cov}}
\def\argmin{\ \mathrm{Argmin}}
\def\E{\mathbb{E}}
\def\Z{\mathbb{Z}}
\def\CL{\mathcal{L}}
\def\epsilon{\varepsilon}
\def\var{\mathrm{Var}}
\def\cov{\mathrm{Cov}}
\def\argmin{\ \mathrm{Argmin}}
\begin{document}

\title{Estimating and forecasting partially linear models with non stationary exogeneous variables}

\author {Xavier Brossat$^{\text\small{1}}$  \, Georges
  Oppenheim$^{\text\small{2}}$\, and \, Marie-Claude Viano$^{\text\small{3}}$ \\
\
{\small $^{^{\text\small{1}}}$
Département OSIRIS -- Service de Recherche et Développement -- EDF}  \\
{\small 1, av. du G\'en\'eral de Gaulle 92140 Clamart, France}\\
{\small 
$^{^{\text\small{2}}}$ 
Laboratoire d'Analyse et de Mathématiques Appliquées Université Paris-Est}\\
{\small 5, bd. Descartes,
Cité Descartes - Champs-sur-Marne}\\
{\small 77454 Marne-la-Vallée cedex 2, France}\\
{\small $^{^{\text\small{3}}}$ Laboratoire Paul Painlev\'e  UMR CNRS 8524 -- Bat
  M2.} \\
{\small Universit\'e Lille 1, Villeneuve d'Ascq, 59655 Cedex, France.}}
\date{}
%\begin{document}
\maketitle
\begin{abstract}
\noindent This paper presents a backfitting-type method for estimating and forecasting a periodically
correlated partially linear
model with exogeneous variables and heteroskedastic input noise. A rate of
convergence of the estimator is given. The results are valid even if the
period is unknown. \\
\begin{center}
\textbf{R\'esum\'e}
\end{center}
On utilise une procédure itérative de type backfitting pour estimer les
paramètres d'une classe de  modèles partiellement linéaires périodiquement
corrélés présentés pour modéliser l'évolution de la consommation
d'électricité. On obtient une vitesse de convergence des estimateurs et un
intervalle de prévision de consommation.

\end{abstract}

\vskip .5cm 
  
{\bf keywords:}   {$\alpha$-mixing, additive models, backfitting, electricity consumption, forecasting interval,
semiparametric regression smoothing.}

%\textbf{MS 2000 Mathematics Subject Classification: } 60G10 60G12 62M10 62M15

\section{Introduction}
In this paper, we focus on partially  linear models of the type
\begin{equation}\label{gen}
X_{n}= \sum_{j=1}^pa_j X_{n-j}+\sum_{j=0}^qb_j(e_{n-j})+\sigma(e_n,\ldots,e_{n-q'})\varepsilon_n.
\end{equation}
The parameters $p\geq 1$ and $q\geq 0$ are supposed known while the coefficients $a_j$ as well as the functions $b_j$ and $\sigma$ are
unknown. The sequence $(\varepsilon_n)$ is an unobserved system noise. The aim is to predict $X_{n+h}$, for some $h\geq 1$, from $((X_n,e_n),(X_{n-1},e_{n-1}),\ldots)$, the observed set
of past values available at date $n$.

During the last 20 years, partially linear autoregressive models such as (\ref{gen}) have gained
attention, as being a good compromise between linear models and purely non parametric
ones. Such models, proposed in \cite{Eng} to represent the relationship
between weather and electricity consumption are now widely used in the
literature.  See for example \cite{Har} where a chapter is devoted to models
including (\ref{gen}). The functions $b_j$ are expanded on a suitable basis and
the first coefficients of this expansion, together with the $a_j$'s,  are estimated
via a L.M.S method. See also \cite{Har2}. With the same type of partially linear models \cite {Fad,Gan} use wavelets in the estimation scheme. In \cite {Bat}, the $b_j$'s are
treated as nuisance parameters. Let us also mention \cite{Gao, Gao1, Gao2}, devoted to models including purely autoregressive ones, where
some past values operate in a linear form and the others in a functional one. These authors use an orthogonal series method, and propose a data based criterion to determine the truncation parameters. See also chapter 8 in \cite{Fan2}, where models like
$$
X_{n+1}=f(X_n)+ aX_{n-1}+\sigma(X_n)\varepsilon_n. 
$$
include linear and non linear autoregressive summands together with some volatility. The functional parts are estimated via local linear estimators and gaussian limits for the renormalized errors are obtained. 

Model (\ref{gen}) presents several
advantages. Firstly, the additive form reduces the so-called curse of dimensionality. Secondly,
linear autoregression is preserved when expressing the future values $(X_{n+h})_{h=1,\ldots}$ from the past ones $(X_{n-h},e_{n-h})_{h=0,\ldots}$, which makes it easier, and in some sense coherent, forecast at lags greater than $1$.
Lastly, model (\ref{gen}) is
specially well adapted to the situation where the output $X_n$ is electricity consumption at date $n$
and the input $e_n$ the temperature at the same date, since it is well-known that the effect
of temperature on electricity sales is highly non-linear at
extreme temperatures, while linearity of the autoregression seems to be a reasonable assumption. Notice that, in practical situations, the temperature at date $n$ is either measured or forecasted by Météo-France. In both cases, the value of the exogeneous variable $e_n$ is known. 
%%%%%%%%%%%%%%%%%%%%%%%
Accurate electrical load forecasting is essential for power utilities. Electricit\'e de France (EDF) performs a climatic correction. The influence of temperature on electric demand is widely reported. Other extra so-called exogenous variables are included in the short-term models. They may be random variables like wind speed, or deterministic ones like "position-within-the year of the date" which is a year-periodic variable. With those variables, for horizons up to 3 days by 1/2 hourly steps, the forecasts are very efficient when based on nested models studied during many years.
%%%%%%%%%%%%%%%%%%%%%%%%%%%%%%%%%

For simplicity and convenience, we only consider in this paper the situation $q=q'=0$, leading to the model
\begin{equation}\label{lina}
X_{n}= a_1X_{n-1}\ldots+a_pX_{n-p}+b(e_n)+\sigma(e_n)\varepsilon_n,\qquad n\in \Z.
\end{equation}
The algorithm presented below can easily be adapted to the general case $q,q'>0$, and the results of theorem \ref{resum0} still hold with a loss of speed if $b$ or $\sigma$ have non-additive forms.

\subsection{Elements of discussion}
\subsubsection{Backfitting} 
Backfitting methods, first proposed  by \cite{Buj},  
are usually recommended for additive models which involve several explanatory
variables, each having an unknown functional form. The method is well described in
\cite {Fan2,Hast}. See also  \cite{Fan1,Ops, Ops1} where the estimation algorithms
use local polynomial regression and \cite{Mam} based on
projections on polynomial spaces. The performances of backfitting procedures when
autoregression is involved are less well understood. In 
\cite{Wan}, for the non linear stationary autoregressive model with exogeneous variables
$$
X_n=a(X_{n-1})+b(e_n)+\varepsilon_n,
$$
the algorithm works in two steps: the first step builds a preliminary estimator of $a$ et $b$ by piecewise constant functions. Then, from the obtained pseudo remainders, the second step builds kernel estimators of the same functions. 
The author obtains the limit law for the estimation error.

For the model (\ref{gen}),
if the period $T$ is known, a simple estimation scheme would consist of splitting the data in $T$ subsamples, each of them being a trajectory of a stationary process. Then the parameter $\theta={^t(a_1,\ldots,a_p)}$ and the function $b()$ could be estimated separately, the first one at the usual parametric rate, and the second one at the slower usual functional rate (see \cite {Fan2,Spec} for remarks on this question). 
The choice of a backfitting scheme for estimating (\ref{gen}) presents the advantage of allowing the period of the input sequence $(e_n)$ to remain unknown. As it will be proved below, the price to pay for this is a slower rate in the estimation of $\theta$. Note that simulation studies seem to indicate that the iterative method presented below still works even when the period shows slight variations.
%%%%%%%%%%%%%%%%%%%%
Within the backfitting iterations, a kernel based statistics estimates the functional part of the model. Other methods could have been used here (local estimators, splines, wavelets for instances). Usually tuning the bandwidth, through a cross-validation process, enhances the estimators' quality. We haven't studied that point for two reasons: no theoretical results are available and we wished to study the bare quality of the basic estimators. The underlying questions are postponed to a future paper.
%%%%%%%%%%%%%%%%%%%%
\subsubsection{Parameters $p$ and $q$} It could be interesting to estimate the orders $p$ and $q$ of the autoregression and regression parts. In a first approach, we suppose that these parameters are known. In fact, in the particular situation of forcasting electricity consumption, these parameters have been widely studied and are supposed to be known. The order $p$ is large, but the characteristic polynomial has only few non-zero coefficients, so that the coefficients $a_j$ are to be estimated under constraints. Our convergence results can easily be extended to that sort of situation. 
\subsubsection{Comments on the results} Sections \ref{main} and  \ref{int} hereafter mainly consists of asymptotic results. These results are formulated as 
$
\hat\theta_n-\theta =O(u_n)
$
for a sequence $u_n$ going to zero. Several complements are missing: 
\begin{itemize}
\item Is the rate $u_n$ exact?
\item If that is the case, how do we get an idea of the constant in $O(u_n)$?
\item What about the limit distribution of the re-normalized error? 
\end{itemize}
%%%%%%%%%%%
The almost sure (a.s) convergence  is the only type of studied convergence. No central limit theorem is included. The usual developments of expectation and variance, even when they are included, are not brought forward. 
The a.s. convergence is all that is needed to compute the forecast interval as far as the asymptotic interval is concerned. The innovation distribution quantiles are all that we need.
%%%%%%%%%%%%%%%%%%% 
For the last question, a complementary study is in progress, in order to obtain gaussian limits as it is the case in this kind of studies (see for example \cite{Fan2}). The section here devoted to simulations attempts to answer the first questions. See for example, Figure \ref{fig:speed2} and the comments in section \ref{rate}.  

\section{Estimation of the parametric and non parametric components}
The aim is to estimate the functions $b(.)$ and $\sigma(.)$ and the  vector parameter
$$
\theta={^t(a_1,\ldots,a_p)}.
$$
Denoting
$$
\phi_k={^t(X_{k-1},\ldots,X_{k-p})},
$$
the model can be written
\begin{equation}\label{linavec}
X_n={^t\phi_n}\theta+b(e_{n})+\sigma(e_n)\varepsilon_{n}.
\end{equation}

We choose a kernel $K$, and a smoothing parameter $h_n$. 

Having chosen initialised estimation of $\theta$ and a stopping rule, the iterative method consists of estimating $\theta$ (resp.  $b$) by using an estimation of the residual calculated from the previous estimation of $b$ (resp. $\theta$).   
\begin{itemize}
\item Initialisation. Fix the first value $\hat \theta^{(1)}$
\item Step  1. Estimate the function $b$ by a kernel estimator based on the partial residuals 
$$
\hat b_n^{(1)}(e)=\frac{\sum_{l=p+1}^{n}\left(X_{l}-{^t\phi_l}\hat \theta^{(1)}\right)K_n\left(e-e_l\right)}{\sum_{l=p+1}^{n-1}K_n\left(e-e_l\right)}
$$
where 
$$
K_n(e):=K\left(\frac{e}{h_n}\right).
$$
\item Step 2. Update the estimation of $\theta$ by a least mean squares estimator based on the new partial residuals 
\begin{eqnarray*}
\hat \theta_n^{(2)}&=&\argmin_{\theta} \sum_{l=p+1}^{n}(X_{l}-^t\phi_l\theta-\hat b_n^{(1)}(e_l))^2\\
&=&\Sigma_n^{-1}\sum_{l=p+1}^{n-1}\phi_l(X_{l}-\hat b_n^{(1)}(e_l))
\end{eqnarray*}
with 
\begin{equation}\label{matcov}
\Sigma_n=\sum_{l=p+1}^{n}\phi_l ^t\phi_l.
\end{equation}

Finally, the transition from step $k-1$ to step $k$ can be expressed as
\begin{eqnarray}\label{backeq}
\hat b_n^{(k-1)}(e)&=&\frac{\sum_{l=p+1}^{n}\left(X_{l}-^t\phi_l\hat \theta^{(k-1)}\right)K_n\left(e-e_l\right)}{\sum_{l=p+1}^{n-1}K_n\left(e-e_l\right)}\\
\hat \theta_n^{(k)}&=&\Sigma_n^{-1}\sum_{l=p+1}^{n}\phi_l(X_{l}-\hat b_n^{(k-1)}(e_l)).
\end{eqnarray}

\item  Chosing a stopping time $k$ for the iterations, the variance $\sigma^2(e)$ is then estimated by a kernel method using the partial residuals based on the estimates $\hat\theta^{(k)}_n$ and $\hat b_n^{(k-1)}$
\begin{equation}\label{sigma}
{\hat\sigma^2}_{n,k}(e)=\frac{\sum_{l=p+1}^{n-1}\left(X_l-^t\phi_l\hat\theta^{(k)}_n-\hat b_n^{(k-1)}(e_{l})\right)^2K_n\left(e-e_l\right)}{\sum_{l=p+1}^{n-1}K_n\left(e-e_l\right)}
\end{equation}
\end{itemize}
As in the case of linear regression, estimating $\theta$ and $b$ does not need any estimation of $\sigma$, implying that $\hat\sigma_{n,k}$ is obtained at the end of the iterative scheme. See \cite{Fan2} for remarks on this so-called oracle effect. 
\section{Main results}\label{main}
\subsection{Hypotheses}\label{Hypcomp}
%The results below shall be proved under hypotheses $\mathcal H$, and the
%following extra assumptions:% \\

We adopt the following basic hypotheses ($\mathcal H$). 
\begin{itemize}
\item $\mathcal H_1$: \underline{Periodicity}. The exogeneous sequence $(e_n)$ is the sum of a periodic deterministic sequence $(s_n)$ and a bounded zero-mean strong white noise
\begin{equation}\label{per}
e_n=s_n+\eta_n \quad\forall n
\end{equation}
\item $\mathcal H_2$: \underline{Whiteness of the system noise}. $(\varepsilon_n)$ is a bounded i.i.d sequence of zero-mean variables, and $\var(\varepsilon_n)=1$.
\item $\mathcal H_3$: \underline{Stability}. The autoregressive dynamic is stable. In other words, the polynomial
$$
A(z)=z^p-\left(\sum_{j=1}^pa_jz^{p-j}\right)
$$
does not vanish on the domain $\vert z\vert\geq 1$.
\item $\mathcal H_4$: \underline{Independence of the inputs}. The two sequences $(\varepsilon_n)$ and $(\eta_n)$ are independent.
\item $\mathcal H_5$: \underline{On the distributions of input sequences}. The distributions of  $\varepsilon_1$ and $\eta_1$ both have a density. 
The density $f$ of $\eta_1$ is continuous and non-vanishing on the support
$[-m_{\eta},m_{\eta}]$ of $\eta_1$. The density $g$ of $\varepsilon_1$ is $C_1$
and never vanishes on the  support $[-m_{\varepsilon},m_{\varepsilon}]$ of $\varepsilon_1$.
\item $\mathcal H_6$: \underline{On the functions}. 
Let $\mathcal E=\cup_{j=1}^T[s_j-m_\eta,s_j+m_\eta]$ denote the union of the $T$ compact supports of the variables $e_j$.\\
%$\bullet$ The function $b$ is $C_{\ell}$ where $\ell\geq 1$ is some integer %number.\\
\begin{enumerate}
\item The function $b$ is $\gamma$-Hölderian on $\mathcal E$, for some $0<\gamma\leq 1$, which means that
\begin{equation}\label{Hol}
\sup_{e_1,e_2\in \mathcal E}\frac{\vert b(e_1)-b(e_2)\vert}{\vert e_1-e_2\vert^\gamma}<\infty
\end{equation}
\item The variance $\sigma^2(e)$ of the input noise is $\gamma_1$-Hölderian on $\mathcal E$, for some $0<\gamma_1\leq 1$, and 
%\begin{equation}\label{varnoise}
%\E(\sigma^2(s_l+\eta_1))\neq 0\quad \forall l\in\{0,\ldots,T-1\}.
%\end{equation}
\begin{equation}\label{varnoise}
\inf_{e\in\mathcal E}\sigma(e)>0.
\end{equation}
\end{enumerate}
\item $\mathcal H_8$: \underline{On the kernel}.
The kernel $K$ is lipschitzian, and satisfies
$$
\int K(u) du=1
$$
Keeping in mind the example of electricity consumption, hypothesis $\mathcal H_1$ allows some periodicity in the random structure of the input sequence $(e_n)$. Boundedness of the noises (hypotheses $\mathcal H_1$ and $\mathcal H_2$) is assumed only to have shorter proofs. Without this boundedness, the uniform speed in Theorem \ref{resum0} only holds on compact sets. 
Hypothese $\mathcal H_5$ assures that the denominators of the kernel-type estimators of $b$ and $\sigma$ are not asymptotically vanishing. The Hölder exponents $\gamma$ and $\gamma_1$ in hypothesis $\mathcal H_6$, govern the convergence rate of the estimation scheme. 

In what follows, we work with the periodically correlated solution of (\ref{linavec}) defined in section \ref{prel}.
\end{itemize}

\subsection{Existence of $\hat \theta_n^{(k)}$}
The lemma below establishes that, almost surely, the matrix $\Sigma_n=\sum_{l=p+1}^{n}\phi_l ^t\phi_l$ appearing in (\ref{matcov}) and used in estimating  the parameter $\theta$ is invertible at least for large enough $n$. % Denoting
%\begin{equation}\label{fil}
%\phi_k^{(l)}={^t(X_{kT+l-1},X_{kT+l-2},\ldots, X_{kT+l-p})},
%\end{equation}
\begin{Lem}\label{cov}
Under the hypotheses $\mathcal H_{1,2,3,4} $, the matrix $\Sigma_n$ being defined in  (\ref{matcov}), as $n\to \infty$,

(i)
$$
\frac{\Sigma_n}{n}\tops M=\frac{1}{T}\sum_{l=0}^{T-1}\E\left(\phi_0^{(l)}{^t\phi_0^{(l)}}\right)=\frac{1}{T}\sum_{l=0}^{T-1}\left[\mu^{(l)}{^t\mu^{(l)}}+\Gamma^{(l)}\right]
$$ 
where
\begin{equation}\label{fil}
\phi_k^{(l)}={^t(X_{kT+l},X_{kT+l-1},\ldots, X_{kT+l-(p-1)})}.
\end{equation}
and where 
$\mu^{(l)}=\E(\phi_0^{(l)})$ and $\Gamma^{(l)}$ is the covariance matrix of $\phi_0^{(l)}$.

(ii) The limit matrix $M$ is regular.
\end{Lem}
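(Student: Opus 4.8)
The plan is to prove the two assertions separately: (i) is an ergodic-theorem statement for the periodically correlated sequence $(\phi_l{}^t\phi_l)$, and (ii) is a non-degeneracy property of the solution of (\ref{linavec}).

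For (i), I would work from the moving-average representation of the periodically correlated solution built in section \ref{prel}: under $\mathcal H_3$ (stability) one has $X_n=\sum_{j\geq0}c_j\left(b(e_{n-j})+\sigma(e_{n-j})\varepsilon_{n-j}\right)$ with geometrically decaying coefficients $c_j$. Since the two noises are bounded and $b,\sigma$ are continuous, hence bounded, on the compact set $\mathcal E$ (hypothesis $\mathcal H_6$), the process $(X_n)$ --- and therefore every entry of $\phi_l{}^t\phi_l$ --- is uniformly bounded, in particular in $L^2$. Fix now a residue $j\in\{0,\ldots,T-1\}$ and write $\xi_m=(\eta_m,\varepsilon_m)$; by periodicity of $(s_n)$ there is a fixed measurable map $G_j$, not depending on $k$, with $\phi_{kT+j}=G_j\left((\xi_m)_{m\leq kT+j}\right)$, so the subsampled sequence $\left(\phi_{kT+j}{}^t\phi_{kT+j}\right)_{k\in\Z}$ is strictly stationary and, being a factor of the i.i.d.\ family $(\xi_m)$, ergodic. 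Birkhoff's ergodic theorem then gives $\frac1N\sum_{k=0}^{N-1}\phi_{kT+j}{}^t\phi_{kT+j}\tops\E\left(\phi_0^{(j)}{}^t\phi_0^{(j)}\right)$. Splitting $\Sigma_n$ according to the residue of $l$ modulo $T$ --- each residue class contributing $n/T+O(1)$ indices, the finitely many boundary terms being negligible after dividing by $n$ --- and summing over $j$ yields $\Sigma_n/n\tops\frac1T\sum_{j=0}^{T-1}\E\left(\phi_0^{(j)}{}^t\phi_0^{(j)}\right)$. The second expression for $M$ is merely the identity $\E\left(\phi_0^{(j)}{}^t\phi_0^{(j)}\right)=\mu^{(j)}{}^t\mu^{(j)}+\Gamma^{(j)}$.

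For (ii), suppose $M$ is singular: there is $v={}^t(v_1,\ldots,v_p)\neq0$ with ${}^tvMv=0$. Each summand ${}^tv\,\E\left(\phi_0^{(l)}{}^t\phi_0^{(l)}\right)v=\E\left(({}^tv\phi_0^{(l)})^2\right)$ is nonnegative, hence all of them vanish, and by stationarity of the subsampled sequences this forces ${}^tv\phi_k^{(l)}=0$ a.s.\ for every $k$ and every $l$, i.e.\ the linear recurrence $v_1X_n+v_2X_{n-1}+\cdots+v_pX_{n-p+1}=0$ a.s.\ for all $n\in\Z$. I would contradict this by peeling off the coefficients one at a time through a conditional-variance computation. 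Let $\mathcal G_{n-1}=\sigma(\eta_m,\varepsilon_m:m\leq n-1)\vee\sigma(\eta_n)$. From (\ref{linavec}), $\phi_n$, $b(e_n)$ and $\sigma(e_n)$ are $\mathcal G_{n-1}$-measurable while $\varepsilon_n$ is independent of $\mathcal G_{n-1}$ with mean $0$ and variance $1$; therefore $\var(X_n\mid\mathcal G_{n-1})=\sigma(e_n)^2\geq\left(\inf_{\mathcal E}\sigma\right)^2>0$, whereas $X_{n-1},\ldots,X_{n-p+1}$ are $\mathcal G_{n-1}$-measurable. Taking the conditional variance of the recurrence given $\mathcal G_{n-1}$ leaves $0=v_1^2\,\var(X_n\mid\mathcal G_{n-1})$, so $v_1=0$; shifting the index by one and iterating forces $v_2=0$, then $v_3=0$, and so on, so that $v=0$ --- a contradiction. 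Hence $M$ is regular.

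Part (i) is essentially routine once the solution is presented as a factor of the i.i.d.\ input; the only point that needs care there is justifying the stationarity and ergodicity of the subsampled sequences. The substantive point is (ii): the argument hinges entirely on $\inf_{\mathcal E}\sigma>0$ together with the independence and unit variance of $(\varepsilon_n)$, which ensure that the innovation entering at time $n$ is genuinely unpredictable from $\mathcal G_{n-1}$; without such a non-degeneracy assumption $M$ can in fact be singular (take $\sigma\equiv0$ and $b$ constant, so that $X_n$ is a.s.\ constant and $M$ has rank one when $p\geq2$).
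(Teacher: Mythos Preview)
Your argument for (i) --- split into the $T$ stationary ergodic subsequences $(\phi_k^{(l)})_k$ and apply the ergodic theorem --- is exactly what the paper sketches.  For (ii) the paper takes a different route: it writes $X_n=Y'_n+Y_n$ with $Y_n=\sum_{j\geq0}g_j\sigma(e_{n-j})\varepsilon_{n-j}$, observes that singularity of $M$ forces singularity of the noise-part matrix $\sum_l\E(\psi_0^{(l)}{}^t\psi_0^{(l)})$ (all summands in $M$ being positive semidefinite), then passes to the period sums $S_k=Y_k+\cdots+Y_{k-T+1}$ so as to obtain a genuinely \emph{stationary} ARMA process, and concludes because a stationary ARMA with nonzero innovation variance cannot satisfy a nontrivial deterministic linear recurrence.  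Your conditional-variance peeling argument on $X_n$ itself is more direct and more elementary: it avoids both the mean/noise decomposition and the passage to period sums, and it makes the dependence on $\inf_{\mathcal E}\sigma>0$ completely explicit.  The paper's detour, on the other hand, isolates the structural reason for regularity --- the innovation component alone already has full-rank second moments --- and connects it to standard ARMA facts.  Note incidentally that both arguments use $\inf_{\mathcal E}\sigma>0$, i.e.\ part of $\mathcal H_6$, although the lemma as stated only lists $\mathcal H_{1,2,3,4}$; the paper invokes (\ref{varnoise}) at the contradiction step just as you do.
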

The proof is in the Appendix.

\subsection{Analysis of estimation errors}
We first focus on the estimation errors
\begin{equation}
\tilde \theta_n^{(k)}=\theta-\hat\theta_n^{(k)} \quad\hbox{and}\quad
\tilde b_{n}^{(k-1)}(e)=b(e)-\hat b_n^{(k-1)}(e).
\end{equation}
From (\ref{linavec}),
\begin{eqnarray}
\tilde \theta_n^{(k)}&=&-\Sigma_n^{-1}\sum_{l=p+1}^{n}\phi_l\left(\tilde b_{n}^{(k-1)}(e_l)+\sigma(e_l)\varepsilon_l\right)\label{erreq}\\
\tilde b_{n}^{(k-1)}(e)&=&\frac{\sum_{l=p+1}^{n}\left(-^t\phi_l\tilde \theta_n^{(k-1)}+b(e)-b(e_l)-\sigma(e_l)\varepsilon_l\right)K_n\left(e-e_l\right)}{\sum_{l=p+1}^{n}K_n\left(e-e_l\right)}\nonumber\\\label{erreq2}. 
\end{eqnarray}
Thanks to the linearity of $\tilde b_{n}^{(k-1)}(e)$ with respect to $\tilde \theta_n^{(k-1)}$, this leads to the linear recusrive equation

%\begin{eqnarray*}
%&&\tilde \theta_n^{(k)}=
%-\Sigma_n^{-1}\sum_{l=p+1}^n\phi_l\times\\
%&&\times\left[\sigma(e_l)\varepsilon_l+
%\frac{\sum_{j=p+1}^n\left(-{^t\phi_j}\tilde \theta_n^{(k-1)}+b(e_l)-b(e_j)-\sigma(e_j)\varepsilon_j\right)K_n\left(e_l-e_j\right)}{\sum_{j=p+1}^{n}K_n\left(e_l-e_j\right)}\right]
%\end{eqnarray*}
%which is easily rewritten as 
\begin{equation}\label{errtheta}
\tilde \theta_n^{(k)}=A_n\tilde\theta_n ^{(k-1)}+R_n^{(1)}+R_n^{(2)}
\end{equation}
where 
\begin{eqnarray}
A_n&=&
\Sigma_n^{-1}\left(\sum_{l=p+1}^n\phi_l
\frac{\sum_{j=p+1}^n{^t\phi_j}K_n\left(e_l-e_j\right)}{\sum_{j=p+1}^{n}K_n\left(e_l-e_j\right)}\right)\label{An}\\
R_n^{(1)}&=&\Sigma_n^{-1}\sum_{l=p+1}^{n}\phi_l
\frac{\sum_{j=p+1}^{n}\left(b(e_j)-b(e_l)+\sigma(e_j)\varepsilon_j\right)K_n\left(e_l-e_j\right)}{\sum_{j=p+1}^{n}K_n\left(e_l-e_j\right)}\label{1}\\
R_n^{(2)}&=&\Sigma_n^{-1}\sum_{l=p+1}^{n}\phi_l\sigma(e_l)\varepsilon_l\label{3}
\end{eqnarray}

\subsection{Convergence results}
Considering (\ref{errtheta}), we are going to prove that, as $n\to \infty$, the matrix operator $A_n$ converges to a strictly shrinking one. As emphazised in \cite{Buj} this is the key result implying that $\tilde \theta_n^{(k)}$ stabilizes as $k$ increases. Then we prove that the remainder term $R_n^{(1)}+R_n^{(2)}$ tends to zero, which implies that the stabilizing value $\tilde \theta_n^{(\infty)}$ in turn vanishes when $n\to\infty$. 
This leads to the main result, whose detailed proof is in the Appendix.
\begin{Theo}\label {resum0}
With the assumptions of section \ref{Hypcomp},
if the smoothing parameter is such that, as $n\to\infty$
$h_n\sim n^{\beta_1}(\ln n)^{\beta_2}$,
there exists $\beta\in]0,1[$ such that
\begin{equation}\label{result1}
\left.\begin{array}{l}
\normdeux {\hat\theta_n^{(k)}-\theta}\\
\sup_{e\in \mathcal E}\vert\hat b_n^{(k)}(e)-b(e)\vert\\
\end{array}\right\}
=O_{a.s.} \left(\sqrt \frac{\ln
 n}{nh_n}\right)+O_{a.s.}(h_n^\gamma)+O_{a.s.}(\beta^{k})
 \end{equation}
 and
 \begin{equation}\label{variance}
 \sup_{e\in \mathcal E}\vert {\hat\sigma^2}_{n,k}(e)-\sigma^2(e)\vert=O_{a.s.} \left(\sqrt \frac{\ln
 n}{nh_n}\right)+O_{a.s.}(h_n^{\min\{\gamma,\gamma'\}})+O_{a.s.}(\beta^{k})
 \end{equation}
where the $0(.)$'s are  uniform with respect to $k$ and $n$.
\end{Theo}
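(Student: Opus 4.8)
The plan is to exploit the recursive identity (\ref{errtheta}), which reads $\tilde\theta_n^{(k)}=A_n\tilde\theta_n^{(k-1)}+R_n^{(1)}+R_n^{(2)}$, and to establish two quantitative facts, uniform in $n$ for $n$ large: first, that the operator norm of $A_n$ is bounded almost surely by some $\beta<1$ eventually; second, that the remainder $R_n^{(1)}+R_n^{(2)}$ is almost surely of order $\sqrt{\ln n/(nh_n)}+h_n^\gamma$. Granting these, unrolling the recursion gives
\begin{equation*}
\normdeux{\tilde\theta_n^{(k)}}\le \beta^{k-1}\normdeux{\tilde\theta_n^{(1)}}+\frac{1}{1-\beta}\left(\normdeux{R_n^{(1)}}+\normdeux{R_n^{(2)}}\right),
\end{equation*}
which is exactly the claimed bound for $\hat\theta_n^{(k)}-\theta$ once one absorbs $\normdeux{\tilde\theta_n^{(1)}}$ into a constant (it is $O_{a.s.}(1)$ by Lemma \ref{cov} and boundedness of the inputs). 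Plugging this back into (\ref{erreq2}) and bounding the kernel-weighted averages then yields the stated rate for $\sup_{e\in\mathcal E}\vert\hat b_n^{(k)}(e)-b(e)\vert$, the extra $\beta^k$ term coming from the $^t\phi_l\tilde\theta_n^{(k-1)}$ contribution.

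For the contraction property, I would first use Lemma \ref{cov}(i)--(ii) to replace $\Sigma_n/n$ by the regular limit matrix $M$ up to an $o_{a.s.}(1)$ error, and then analyze the normalized sum $\frac1n\sum_l\phi_l\,\widehat{^t\phi}(e_l)$, where $\widehat{^t\phi}(e_l)$ is the Nadaraya–Watson-type average $\sum_j{^t\phi_j}K_n(e_l-e_j)/\sum_j K_n(e_l-e_j)$. The point is that this average is, up to errors controlled by kernel-regression arguments, the conditional expectation $\E({^t\phi}\mid e=e_l)=:{^t m(e_l)}$. Hence $A_n$ converges almost surely to $M^{-1}\,\frac1T\sum_{l=0}^{T-1}\E\!\big(\phi_0^{(l)}\,{^t m(e_l)}\big)$, which is a projection-type operator; its spectral radius is strictly less than $1$ precisely because $\phi$ genuinely depends on more than its conditional mean given $e$ — this is the standard backfitting "concurvity" condition, guaranteed here by $\mathcal H_5$ (non-degenerate noise densities) together with stability $\mathcal H_3$. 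I would then invoke continuity of the spectral radius to get $\|A_n\|\le\beta<1$ eventually, a.s.

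Controlling the remainder and the various kernel averages is where the real work lies, and I expect it to be the main obstacle. The ingredients are: (a) an $\alpha$-mixing property of the periodically correlated solution $(X_n,e_n)$ constructed in section \ref{prel} — this follows from $\mathcal H_{1\text{--}4}$ since the process is a stable autoregression driven by i.i.d.\ innovations, giving geometric mixing; (b) a uniform-in-$e$ strong law with rate for kernel-weighted sums of mixing, bounded variables, yielding the $\sqrt{\ln n/(nh_n)}$ term via a Bernstein-type inequality for mixing sequences combined with a chaining/covering argument over the compact set $\mathcal E$ (this uses $\mathcal H_8$, Lipschitz $K$, and $\mathcal H_5$ to keep denominators bounded away from $0$); (c) the $h_n^\gamma$ bias term from the Hölder hypothesis $\mathcal H_6$ applied to $b(e_j)-b(e_l)$ when $|e_l-e_j|\lesssim h_n$. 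The term $R_n^{(2)}=\Sigma_n^{-1}\sum_l\phi_l\sigma(e_l)\varepsilon_l$ is a martingale-type sum and is handled separately, being $O_{a.s.}(\sqrt{\ln n/n})$, which is dominated. Finally, the variance estimate (\ref{variance}) is obtained by expanding the squared partial residual in (\ref{sigma}): the leading term reproduces a kernel estimator of $\sigma^2(e)$ (contributing $\sqrt{\ln n/(nh_n)}$ and $h_n^{\min\{\gamma,\gamma_1\}}$, the two Hölder exponents both entering since one smooths $\sigma^2$ and the cross terms involve $b$), while the remaining terms are products of already-controlled errors and hence of smaller or equal order, with the $\beta^k$ coming from the dependence on $\hat\theta_n^{(k)}$ and $\hat b_n^{(k-1)}$. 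All $O$'s are uniform in $(n,k)$ because $\beta$ and the constants in the mixing/kernel inequalities do not depend on $k$, and the geometric factor $\beta^k$ is summed via the fixed $1/(1-\beta)$.
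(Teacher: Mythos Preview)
Your overall strategy matches the paper's: analyze the recursion (\ref{errtheta}), show $A_n$ converges to a contractive limit, bound $R_n^{(1)},R_n^{(2)}$ via kernel and martingale arguments, then propagate to $\hat b_n^{(k)}$ and $\hat\sigma^2_{n,k}$. Two points deserve attention.

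First, there is a gap in your contraction step. You write that continuity of the spectral radius yields $\|A_n\|\le\beta<1$ eventually, but spectral radius $<1$ does not imply operator norm $<1$ (any nonzero nilpotent matrix is a counterexample), so the one-step bound $\normdeux{\tilde\theta_n^{(k)}}\le\beta\normdeux{\tilde\theta_n^{(k-1)}}+\cdots$ does not follow from your argument. The paper avoids this by proving directly (Lemma \ref{supp}) that some fixed power $A^{k_0}$ has $\ell^2$-operator norm $<1$, via an explicit computation exploiting the low-rank structure $A=M^{-1}\sum_{j,l}\alpha_{jl}\,\mu^{(j)}{^t\mu^{(l)}}$: the matrices $S_{jl}$ arising there satisfy $S_{jl}^2=\beta_{jl}S_{jl}$ with $|\beta_{jl}|<1$, which forces $A^k\to0$ geometrically. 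One then iterates (\ref{errtheta}) in blocks of size $k_0$. Your concurvity heuristic is the right intuition, but the paper's argument is concrete matrix algebra and does not pass through spectral-radius continuity or a projection interpretation.

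Second, a technical divergence: you propose unconditional $\alpha$-mixing of $(X_n,e_n)$. The paper instead conditions on the entire exogenous trajectory $\overline E=(e_j)_j$: given $\overline E$, the sequence $\phi_jK_n(e_j-e)$ is geometrically $\alpha$-mixing \emph{uniformly in $\overline E$} (Lemma \ref{mix0}, via Gorodetskii's criterion for linear processes), and Rio's exponential inequality is applied conditionally before integrating out. This separation also lets the paper handle $R_n^{(1)}$ with the elementary Bernstein inequality for \emph{independent} summands, since only the i.i.d.\ $e_j$'s and $\varepsilon_j$'s appear there; the mixing machinery is reserved for $A_n$, where the $\phi_j$'s enter. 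Your unconditional-mixing route is plausible but is not what the paper does, and establishing geometric mixing for the non-stationary periodic process would itself require some care.
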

We see that the convergence rate of $\hat\sigma^2_{n,k}(e)$ cannot exceed that of the other parameters and can even be slower when $b(e)$ is smoother than $\sigma(e)$. The equality (\ref{variance}) is proved in the Appendix.

As a result, an optimal rate is obtained by chosing convenient values for $\beta_1$
and $\beta_2$.

\begin{Cor}\label{resum}
Under the same hypotheses, if $h_n\sim \left(\ln n/n\right)^{\frac{1}{2\gamma+1}}$, there exists $\beta\in]0,1[$ such that
\begin{displaymath}
\left.\begin{array}{l}
\normdeux {\hat\theta_n^{(k)}-\theta}\\
\sup_{e\in \mathcal E}\vert\hat b_n^{(k)}(e)-b(e)\vert\\
\end{array}\right\}
=O_{a.s.}\left(\frac{\ln n}{n}\right)^{\frac{\gamma}{2\gamma+1}}+O_{a.s.}(\beta^{k})
 \end{displaymath}
 and
 \begin{equation*}
 \sup_{e\in \mathcal E}\vert {\hat\sigma^2}_{n,k}(e)-\sigma^2(e)\vert=O_{a.s.}\left(\frac{\ln n}{n}\right)^{\frac{\min\{\gamma,\gamma'\}}{2\gamma+1}}+O_{a.s.}(\beta^{k})
 \end{equation*}
\end{Cor}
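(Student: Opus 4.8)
The plan is to deduce Corollary \ref{resum} directly from Theorem \ref{resum0} by specializing the smoothing sequence $h_n$. In Theorem \ref{resum0} the rate is a sum of three terms, $\sqrt{\ln n/(nh_n)}$, $h_n^{\gamma}$, and $\beta^{k}$, the last of which is untouched by the choice of bandwidth. So the whole exercise is to choose $h_n$ so as to balance the first two terms, which are the only ones depending on $h_n$. Setting $\sqrt{\ln n/(nh_n)}\asymp h_n^{\gamma}$ gives $\ln n/n \asymp h_n^{2\gamma+1}$, i.e. $h_n\sim(\ln n/n)^{1/(2\gamma+1)}$, which is exactly the choice announced in the statement; this has the required form $h_n\sim n^{\beta_1}(\ln n)^{\beta_2}$ with $\beta_1=-1/(2\gamma+1)$ and $\beta_2=1/(2\gamma+1)$, so Theorem \ref{resum0} applies.

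The steps, in order, are as follows. First I would check that the proposed $h_n$ satisfies the hypotheses of Theorem \ref{resum0}, namely that it is of the form $n^{\beta_1}(\ln n)^{\beta_2}$; this is immediate. Second, I would substitute into the first two error terms: with this $h_n$ one has $nh_n\sim n^{2\gamma/(2\gamma+1)}(\ln n)^{1/(2\gamma+1)}$, hence
\begin{equation*}
\sqrt{\frac{\ln n}{nh_n}}\sim\left(\frac{\ln n}{n}\right)^{\frac{\gamma}{2\gamma+1}}
\quad\text{and}\quad
h_n^{\gamma}\sim\left(\frac{\ln n}{n}\right)^{\frac{\gamma}{2\gamma+1}},
\end{equation*}
so both terms collapse to a single term of order $(\ln n/n)^{\gamma/(2\gamma+1)}$. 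Third, carrying the untouched $O_{a.s.}(\beta^{k})$ term along, and using that the $O(\cdot)$'s in Theorem \ref{resum0} are uniform in $k$ and $n$, one obtains the first displayed bound of the corollary for both $\normdeux{\hat\theta_n^{(k)}-\theta}$ and $\sup_{e\in\mathcal E}|\hat b_n^{(k)}(e)-b(e)|$. Fourth, the same substitution applied to (\ref{variance}) replaces $h_n^{\min\{\gamma,\gamma'\}}$ by $(\ln n/n)^{\min\{\gamma,\gamma'\}/(2\gamma+1)}$, which dominates $\sqrt{\ln n/(nh_n)}=(\ln n/n)^{\gamma/(2\gamma+1)}$ (since $\min\{\gamma,\gamma'\}\le\gamma$), yielding the second displayed bound for ${\hat\sigma^2}_{n,k}$.

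There is essentially no obstacle here: the corollary is a routine optimization of the bandwidth in an already-proved rate, and the only mildly delicate point is bookkeeping with the logarithmic factor — one must be careful that the $(\ln n)^{\beta_2}$ factor inside $h_n$ is carried consistently through $nh_n$ so that the two leading terms match exactly rather than only up to logarithmic factors, which is what makes the clean exponent $(\ln n/n)^{\gamma/(2\gamma+1)}$ appear. All the genuine work — the recursive control of $\tilde\theta_n^{(k)}$ via the contraction of $A_n$, the negligibility of $R_n^{(1)}+R_n^{(2)}$, and the uniform-in-$e$ kernel estimates — is already contained in Theorem \ref{resum0}, whose proof is deferred to the Appendix.
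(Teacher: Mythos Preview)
Your proposal is correct and matches the paper's own treatment: the paper gives no separate proof of Corollary~\ref{resum}, merely remarking that ``an optimal rate is obtained by chosing convenient values for $\beta_1$ and $\beta_2$,'' which is exactly the bandwidth-balancing substitution you carry out. Your bookkeeping of the logarithmic factor and the observation that $h_n^{\min\{\gamma,\gamma'\}}$ dominates $\sqrt{\ln n/(nh_n)}$ in the variance bound are correct and complete the argument.
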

%%%%%%%%%%%%%%%%%%%%%%%%%%%%%%%%%%%%%

%\begin{remm}\label{gamma=1}
It is clear that,
provided $\beta$ is not too close to $1$, the convergence of
the term $\beta^k$ to zero is fast. In other words, stabilisation of the
iterations is easily obtained while  convergence of $\left(\frac{\ln
  n}{n}\right)^{\frac{\gamma}{2\gamma+1}}$ to zero requires large sample
size. More precisely, taking $k=k(n)\geq C\ln n$ gives

\begin{Cor}\label{rresum}
Under the same hypotheses as in Corollary \ref{resum} and with $h_n\sim \left(\ln n/n\right)^{\frac{1}{2\gamma+1}}$, if the recursive scheme stops after $k(n)\geq C\ln n$ iterations
\begin{displaymath}
\left.\begin{array}{l}
\normdeux {\hat\theta_n^{(k(n))}-\theta}\\
\sup_{e\in \mathcal E}\vert\hat b_n^{(k(n))}(e)-b(e)\vert\\
\end{array}\right\}
=O_{a.s.}\left(\frac{\ln n}{n}\right)^{\frac{\gamma}{2\gamma+1}}
\end{displaymath}
and
\begin{equation*}
\sup_{e\in \mathcal E}\vert {\hat\sigma^2}_{n,k(n)}(e)-\sigma^2(e)\vert=O_{a.s.}\left(\frac{\ln n}{n}\right)^{\frac{\min\{\gamma,\gamma'\}}{2\gamma+1}}
\end{equation*}
\end{Cor}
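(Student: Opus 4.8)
The plan is to simply substitute the prescribed choices $h_n \sim (\ln n / n)^{1/(2\gamma+1)}$ and $k = k(n) \geq C\ln n$ into the conclusion of Corollary \ref{resum} and check that the third error term becomes negligible relative to the first. Since Corollary \ref{resum} already gives
$$
\normdeux{\hat\theta_n^{(k)}-\theta} \;=\; O_{a.s.}\!\left(\frac{\ln n}{n}\right)^{\frac{\gamma}{2\gamma+1}} + O_{a.s.}(\beta^{k}),
$$
and the analogous bounds for $\sup_{e\in\mathcal E}\vert \hat b_n^{(k)}(e)-b(e)\vert$ and for $\sup_{e\in\mathcal E}\vert\hat\sigma^2_{n,k}(e)-\sigma^2(e)\vert$, with the $O_{a.s.}$ uniform in $k$ and $n$, the only thing left to verify is that along the sequence $k=k(n)$ the term $\beta^{k(n)}$ is dominated by $(\ln n / n)^{\gamma/(2\gamma+1)}$.

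First I would fix the constant. Since $\beta \in\,]0,1[$, write $\beta^{k(n)} \leq \beta^{C\ln n} = n^{C\ln\beta} = n^{-C|\ln\beta|}$. The competing rate is $(\ln n / n)^{\gamma/(2\gamma+1)}$, which is at least $n^{-\gamma/(2\gamma+1)}(\ln n)^{\gamma/(2\gamma+1)}$, hence bounded below by $n^{-1/2}$ for large $n$ (as $\gamma/(2\gamma+1) < 1/2$). Therefore it suffices to choose $C$ so that $C|\ln\beta| \geq 1/2$, i.e. $C \geq 1/(2|\ln\beta|)$; more simply, any $C > \gamma/\big((2\gamma+1)|\ln\beta|\big)$ makes $n^{-C|\ln\beta|} = o\!\big((\ln n/n)^{\gamma/(2\gamma+1)}\big)$, so that the $\beta^{k(n)}$ contribution is absorbed into the first $O_{a.s.}$ term. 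The same computation handles the variance estimate, with $\gamma$ replaced by $\min\{\gamma,\gamma'\}$ in the exponent; since $\min\{\gamma,\gamma'\} \leq \gamma$ the threshold on $C$ is no larger, so the same $C$ works for all three statements simultaneously.

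There is essentially no obstacle here: the corollary is a bookkeeping consequence of the uniformity (in $k$ and $n$) of the $O_{a.s.}$ bounds in Corollary \ref{resum}, which is itself inherited from Theorem \ref{resum0}. The one point to state carefully is precisely that uniformity — because $k=k(n)$ now grows with $n$, one cannot apply Corollary \ref{resum} for a fixed $k$ and then let $n\to\infty$; instead one uses that the constants implicit in the three $O_{a.s.}$ terms do not depend on $k$, so the bound may be evaluated along the diagonal $k=k(n)$. Once this is noted, the proof reduces to the elementary inequality $\beta^{C\ln n} = o\!\big((\ln n/n)^{\gamma/(2\gamma+1)}\big)$ established above, and the statement follows.
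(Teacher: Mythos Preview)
Your proposal is correct and matches the paper's approach: the paper does not give a separate proof of this corollary, treating it as an immediate consequence of Corollary~\ref{resum} via the one-line observation ``taking $k=k(n)\geq C\ln n$'' just before the statement. Your write-up supplies exactly the missing arithmetic---that $\beta^{C\ln n}=n^{-C|\ln\beta|}$ is $o\big((\ln n/n)^{\gamma/(2\gamma+1)}\big)$ for $C$ large enough---and correctly flags that the uniformity in $k$ of the $O_{a.s.}$ bounds (inherited from Theorem~\ref{resum0}) is what allows evaluation along the diagonal $k=k(n)$.
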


%\end{remm}
%\begin{remm}\label{k=2}
%Dans les cas $p=1,2$ des autorégressions d'ordre un et deux, on peut prendre $k_0=1$ dans (\ref{k}). Ceci est évident si $p=1$. La démonstration lorsque $p=2$ est en annexe.
%\end{remm}
\begin{remm}\label{pôle}
With the above remark in mind, it is interesting to note that, when the autoregression is close to the
instability domain, the value of $\beta$ can approach  $1$. In such
situations, a large number of iterations is needed before the stabilisation of
the iterative scheme.  For example consider the particular
model 
$$
X_n=aX_{n-1}+b(e_n)+\varepsilon_n
$$
where the
sequence $(e_n)$ is i.i.d.
From Lemma  \ref{AAPE},
$$
A=\frac{\E(X_n)^2}{\E(X_n)^2+\sigma(0)}=\frac{1}{1+\frac{\sigma(0)}{\E(X_n)^2}},
$$
where  $\sigma(0)=c^2/(1-a^2)$ and $\E(X_n)=c'/(1-a)$. Hence,
$$
A=\frac{1}{1+C\frac{1-a}{1+a}}\to 1\quad\hbox{if}\quad a\to 1.
$$
Consequently, the iterative scheme can be very slow if  $a$ is close to $1$. On the
opposite,  when $a$ is close to $-1$, the iterations stabilize very
quickly.
\end{remm}

\subsection{Improvement of the rate for smooth functions $b$}
As well-known in functional estimation, a smoother $b$ induces, with some
extra conditions on the kernel $K$, a better rate of convergence of the
estimators. 
\begin{Cor}\label{better}
If the function $b$ is $C_{\ell}$ for some integer $\ell> 1$ and if the kernel satisfies
\begin{eqnarray}
\int e^kK(e) de &=&0\quad\forall k\in{1,\ldots,\ell}\label{kerneg}\\
\int K(e) de &=&1
\end{eqnarray}
(i) if the smoothing parameter is such that, as $n\to\infty$,
$h_n\sim n^{\beta_1}(\ln n)^{\beta_2}$
there exists $\beta\in]0,1[$ such that
\begin{eqnarray*}
\normdeux {\hat\theta_n^{(k)}-\theta}&=&O_{a.s.} \left(\sqrt \frac{\ln n}{nh_n}\right)+O(h_n^\ell)+O_{a.s.}(\beta^{k})\\
\sup_{e\in \mathcal E}\vert\hat b_n^{(k)}(e)-b(e)\vert&=&O_{a.s.} \left(\sqrt \frac{\ln n}{nh_n}\right)+O(h_n^\ell)+O_{a.s.}(\beta^{k})
\end{eqnarray*}
(ii) if $h_n\sim \left(\ln n/n\right)^{\frac{1}{2\ell+1}}$ the rate of the two
first terms is optimal and becomes 
$$
O_{a.s.}\left(\frac{\ln n}{n}\right)^{\frac{\ell}{2\ell+1}},
$$
\end{Cor}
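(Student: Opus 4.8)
\textbf{Proof proposal for Corollary \ref{better}.} The plan is to re-run the proof of Theorem \ref{resum0} essentially verbatim, the only modification being the analysis of the bias of the kernel estimators. Recall that the regularity of $b$ enters that proof in exactly two places: in the remainder term $R_n^{(1)}$ of (\ref{1}), through the differences $b(e_j)-b(e_l)$ weighted by $K_n(e_l-e_j)$, and in the direct bias of $\hat b_n^{(k)}(e)$, through $b(e)-b(e_j)$ weighted by $K_n(e-e_j)$. In both spots the estimate previously used was the crude H\"older bound $\vert b(e_1)-b(e_2)\vert\le C\vert e_1-e_2\vert^\gamma\le Ch_n^\gamma$ valid on the support of $K_n$, which is what produced the $O_{a.s.}(h_n^\gamma)$ summand. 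Everything else is untouched, since it uses no more than the boundedness of $b$ on $\mathcal E$: the a.s. convergence $\Sigma_n/n\to M$ of Lemma \ref{cov}, the proof that $A_n$ in (\ref{An}) converges to a strict contraction (hence the geometric term $\beta^{k}$), and the control of the stochastic terms carrying $\sigma(e_l)\varepsilon_l$ in $R_n^{(2)}$ of (\ref{3}), which yields the $O_{a.s.}(\sqrt{\ln n/(nh_n)})$ summand.

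So the only new ingredient is a sharper bias bound. Since $b\in C_{\ell}$, for $e_1,e_2\in\mathcal E$ I would use the Taylor expansion
$$ b(e_2)-b(e_1)=\sum_{m=1}^{\ell-1}\frac{b^{(m)}(e_1)}{m!}(e_2-e_1)^m+\frac{b^{(\ell)}(\xi)}{\ell!}(e_2-e_1)^\ell, $$
with $\xi$ between $e_1$ and $e_2$. Plugging this into $R_n^{(1)}$ and into the bias part of $\hat b_n^{(k)}(e)$, the Lagrange remainder is bounded on the support of $K_n$ by $\frac{\Vert b^{(\ell)}\Vert_{\infty,\mathcal E}}{\ell!}(Ch_n)^\ell=O(h_n^\ell)$, which is precisely the term wanted. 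For each $m\in\{1,\dots,\ell-1\}$ the polynomial term contributes the empirical kernel moment
$$ \frac{\sum_{j}(e_j-e)^m K_n(e-e_j)}{\sum_{j}K_n(e-e_j)}. $$
Here I invoke the same a.s.\ uniform convergence of kernel sums already used for Theorem \ref{resum0}: normalised by $1/(nh_n)$, the numerator converges a.s.\ and uniformly in $e\in\mathcal E$ (up to a fluctuation absorbed into the $O_{a.s.}(\sqrt{\ln n/(nh_n)})$ term) to $(-h_n)^m\int v^m K(v)\,\bar f(e-h_nv)\,dv$, where $\bar f=\frac1T\sum_{j=0}^{T-1}f(\cdot-s_j)$ is the periodically averaged design density, while the denominator tends to $\bar f(e)>0$. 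The moment conditions (\ref{kerneg}), $\int v^m K(v)\,dv=0$ for $1\le m\le\ell$, annihilate the leading term of each such integral. Feeding these improved bounds through the fixed-point argument of Theorem \ref{resum0} exactly as the $O_{a.s.}(h_n^\gamma)$ term was fed through there replaces $h_n^\gamma$ by $h_n^\ell$ everywhere, which is assertion (i).

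The delicate point, and the one I expect to be the main obstacle, is the last one: showing that the surviving empirical moments are genuinely $O(h_n^\ell)$ and not merely $o(h_n)$. After subtracting the vanishing leading term one is left with $(-h_n)^m\int v^m K(v)\,[\bar f(e-h_nv)-\bar f(e)]\,dv$, which is only controlled by the modulus of continuity of $\bar f$; reaching the full exponent $\ell$ requires $\bar f$ — equivalently, by $\mathcal H_5$, the density $f$ of $\eta_1$ — to be of class $C_{\ell}$, an extra regularity assumption that must be read into $\mathcal H_5$ for this corollary. Granting it, a further Taylor expansion of $\bar f$ combined once more with (\ref{kerneg}) gives the claimed $O(h_n^\ell)$ (and a matching control of the fluctuation by the usual exponential inequalities used in Theorem \ref{resum0}); the rest is bookkeeping. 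Finally, part (ii) is the routine bias--variance balance: with bias of order $h_n^\ell$ and stochastic term of order $\sqrt{\ln n/(nh_n)}$, equating $h_n^{2\ell}\asymp \ln n/(nh_n)$ gives $h_n\asymp(\ln n/n)^{1/(2\ell+1)}$ and the common rate $(\ln n/n)^{\ell/(2\ell+1)}$, as stated.
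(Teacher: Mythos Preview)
Your approach is exactly the paper's: its entire proof consists of the single line ``based on the fact that, using (\ref{kerneg}), $\int (b(vh_n+e)-b(e))K(v)f(vh_n+e)\,dv=O(h_n^{\ell})$, is omitted'', with everything else inherited verbatim from Theorem \ref{resum0}. You have in fact been more careful than the authors: your observation that pushing the intermediate Taylor terms down to $O(h_n^\ell)$ requires the design density $f$ (hence $\bar f$) to be of class roughly $C_{\ell-1}$ is correct and is silently glossed over in the paper's sketch---under $\mathcal H_5$ as written (mere continuity of $f$), the displayed bias integral is only $o(h_n)$, not $O(h_n^\ell)$.
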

The proof, based on the fact that, using (\ref{kerneg}),
$
\int (b(vh_n+e)-b(e))K(v)f(vh_n+e)dv=O(h_n^{\ell}),
$
is omitted.
%\subsubsection{Constants and optimality of the rate}
%Since all the above results are formulated in terms of bounds for the errors, it shoud be clear that nothing proves optimality of the rate. However, our simulations seem to indicate that at least the term $n^{-\gamma/(1+\gamma)}$ is correct. See for example figure and its comments in section \ref{}.
%Nevertheless, even if the obtained rate is correct, it remains that the constants are unknown, and ??????A TERMINER.
\section{Forecasting intervals}\label{int}
The natural predictor for $X_{n+1}$, 
$$
\E(X_{n+1}\vert e_{n+1},e_n,\ldots,e_1, X_n,\ldots,X_1)={^t\phi_{n+1}}\theta+b(e_{n+1})
$$
can be evaluated via the estimates of $\theta$ and $b$ based on the observations
up to time $n$. In other words, we propose the predictor 
$$
\hat X_{n+1}={^t\phi_{n+1}}\hat \theta_n+\hat b_n(e_{n+1}).
$$
It should be clear that, under the conditions of Corollary \ref{resum}, 
$$
\frac{\hat X_{n+1}- X_{n+1}}{\hat\sigma_n(e_{n+1})}\toloi \varepsilon_1,
$$
and, consequently, building a prediction interval requires an estimation of
the noise's quantile function $Q(t)$. The inverse of $Q$ can be
consistently estimated by
$$
\hat Q_n^{-1}(a)=\frac{1}{n}\sum_{j=1}^{n-1}\indz{\left\vert\frac{\hat X_{j+1,n}- X_{j+1}}{\hat\sigma_n(e_{j+1})}\right\vert>a},
$$
based on the set of
retroactive predictions $
\hat X_{j+1,n}={^t\phi_{j+1}}\hat \theta_n+\hat b_n(e_{j+1}),\quad j\leq n-1.
$
which use the estimates available at time $n$.

Summarizing, for $X_{n+1}$ we obtain the prediction interval at asymptotic level $\alpha$ 
$$
\left [\hat X_{n+1}-\hat \sigma_{n}(e_{n+1})\hat Q_n(\alpha)~,~\hat X_{n+1}+\hat \sigma_{n}(e_{n+1})\hat Q_n(\alpha)\right]
$$

%%%%%%%%%%%%%%%%%%%%%%%%%%%%%%%%%%%%%%%%%%%%%%%%%%%%%%%%%%%%%%%%%%
\section{Simulation examples: results and comments}
%%%%%%%%%%%%%%%%%%%%%%%%%%%%%%%%%%%%%%%%%%%%begin{itemize%%%%%%%%%%%%%%%%%%%%%%
Several aspects of the present paper, but not all, rely on the EDF modeling-forecasting process. Some are theoretical and some others practical. Let us mention some of them which are of interest when performing the data simulation part.\\
$\bullet$ Question 1. What is the influence of a single irregular point of the  function $b$ on the regular points estimators? \\
$\bullet$ Question 2. The results are proved when both $k\to \infty$ and $n\to \infty$. Could we use a simplified procedure base on one iteration $k=1$ of the inner loop? Do we actually get benefit from the iteration process? Remember we do not use any preliminary estimator. From a practical point of view, can we get any information linking the stochastic process dependencies to a good value of $k$?\\
$\bullet$ Question 3. When the EDF engineers  estimate a model, the question of the sample size $n$ is a recurrent one. A sample could be said to be large when either its cost is high or when the distance of the statistic distribution (computed with $n$ observations) to its limit distribution is small. Theorem \ref{resum0} and Corollaries \ref{rresum} and \ref{better} offer a speed of convergence where the constants, as often, are missing.\\
The simulations answer some of these questions. 
 
%This section intends to illustrate the above asymptotic results, to precise the rates of convergence of the %backfitting algorithm and to give an idea of what happens for small samples.  
Three type of autoregressions are chosen, two of order one and one of order 4. 
\begin{enumerate}
\item An AR1 process with a positive coefficient 
\begin{equation}\label{+}
X_{n+1}=0.7 X_n+b(e_n)+\sigma(e_n)\varepsilon_{n+1}
\end{equation}
\item An AR1 process with a negative coefficient 
\begin{equation}\label{-}
X_{n+1}=-0.7 X_n+b(e_n)+\sigma(e_n)\varepsilon_{n+1}
\end{equation}
\item An AR4 process
\begin{equation}\label{4}
X_{n+1}=a_1X_n+a_2X_{n-1}+a_3X_{n-2}+a_4X_{n-3}+b(e_n)+\sigma(e_n)\varepsilon_{n+1}
\end{equation}
where the roots of the characteristic polynomial are
$\pm 0.5$ and $0.5\pm 0.25$.
\end{enumerate}
The functions $b$ and $\sigma$ are the same in all examples
$$
b(e)=\sqrt{\vert e\vert},\qquad \sigma(e)=1+\frac{e^2}{24}.
$$
The input white noise $\varepsilon_n$ has a standard gaussian distribution, and the exogeneous $e_n=s_n+\eta_n$ where $s_n$ is a 6-periodic sequence with $s_0=-1.2,\;s_1=3.1, \;s_2=1.80,\;s_3= -2.51,;\ s_4=-3.2,\;s_5= -0.25$ and where the noise $\eta_n$ is i.i.d. with marginal distribution uniform on $[-3,+3]$.
%\subsection{A sample path}
%Figure \ref{fig:samplepath} presents a section of size $100$ of the sample path of one of the above models. As for all periodically correlated processes, the periodicity is not at all apparent in the sample path, and it is well known that it is no more visible in the empirical autocovariance sequence (see \cite{MR1724695} for example). 
\subsection{Three examples}

For each of the three models (\ref{+}), (\ref{-}) and (\ref{4}), a trajectory of size $n=5000$ is simulated and the estimations of the parameter $\theta$ and of the functional parameter $b$ are carried over through a number $k$ of iterations varying from $1$ to $50$. Having reached the last iteration, the estimation of $\sigma^2$ is then calculated.
The kernel $K$ is the gaussian kernel and the smoothing parameters $h_n$ and $h'_n$, used in the estimations of $b$ and $\sigma$, are
\begin{equation}\label{hn}
h_n=1.5 \hat S_e n^{-1/2}\quad\hbox{and}\quad h'_n= 0.15 \hat S_e n^{-1/3}
\end{equation}
where $\hat S_e$ is the empirical standard deviation of the $e_j$'s.

The results are depicted in Figures \ref{fig:estimationFig2}, \ref{fig:estimationFig3} and  \ref{fig:estimationFig4}. The upper-left graphic shows the function $b(e)=\sqrt{\vert e\vert}$, its estimate after $50$ iterations together with the cloud of partial residuals used to calculate the estimate (see formula (\ref{backeq})). The upper-right graphic shows $b$ and the evolution of its estimations $\hat b_n^{(k)}$ as $k$ varies from $1$ to $50$. The lower-left graphic shows the evolution of the estimator of the AR parameter $\theta$ as a function of the number $k$ of iterations, and the lower-right one presents the standard deviation $\sigma(e)$ and its final estimation. 
\subsubsection{Model (\ref{+})}
Four main effects are noticeable. 
\begin{itemize}
\item As the number of iterations increases, the estimates of $b$ and of $\theta$ improve. 
\item  The iterations stabilize very slowly. This is not surprising since the value of the parameter $\theta=0.7$ is close to $1$ (see Remark \ref{pôle} just after Corollary \ref{rresum}).
\item As expected, for fixed $k$, the convergence of $\hat b_n^{(k)}(e)$ is far worse in the neighbourhood of $e=0$, discontinuity point of $b'$. This effect is even still visible for the estimator of $\sigma(e)$ (lower-right graphic), despite the smoothness of this function at this point. 
\end{itemize}
\subsubsection{Model (\ref{-})}
Compared with the first example, there are only two differences
\begin{itemize}
\item The iterations stabilize quickly ($4$ iterations are enough), due to the fact that $\theta=-0.7$ is close to $-1$, 
\item But the obtained limit value of $\hat\theta_n^{(4)}$ is not very close to the true value, meaning that in this case, more observations are needed for a good estimation. However, the estimate of $\sigma(e)$ seems quite good.
\end{itemize}
\subsubsection{Model (\ref{4})}
In this example $\theta$ has $4$ components. They are indicated, in the lower-left graphic, by $4$ horizontal lines. The stabilisation point of the iterations is between those obtained in the two other examples ($40$ iterations are enough), perhaps due to the presence of the positive root $0.5$. The sample size is large enough to get good estimations. It seems that the order of the autoregression, at least for moderate orders, has no significant effect on the quality of the method. 

\subsection{Evolution of the estimation errors as functions of the sample size}\label{rate}
In the two last sections, we take model (\ref{-}) 
and we simulate sample paths for sizes going from $200$ to $10000$. For each sample path, the estimations of the three parameters $\theta$, $b$ and $\sigma^2$ and of the distribution of the noise are computed, based on $k=20$ iterations. Then the estimation errors are calculated.
Except for the error on $\theta$, we compute three sorts of errors, based on $L_1$, $L_2$ and $L_\infty$ norms:
\begin{itemize}
\item For the functional parameters $b$ and $\sigma$, denoting by $d$ the length of the domain of $e$, we choose

$$
N_1(h)=\frac{1}{\sqrt d}\int \vert h(e)\vert de,\qquad N_2(h)=\sqrt{\int h^2(e) de}\quad \hbox{and}\quad N_\infty(h)=\sqrt d \normsup h
$$
which satisfy $N_1\leq N_2\leq  N_{\infty}$
\item For the noise distribution, we compute the total variation, the Hellinger and the Kolmogorov distances. 
\end{itemize}
Moreover, in order to reduce fluctuations, we simulate fifty independent trajectories for each sample size, and compute the average of the errors obtained from these trajectories. 

The averaged errors are presented in Figures \ref{fig:speed1} and \ref{fig:speed2} which show, from top to bottom and left to right,  the error on $\theta$, $b$, $\sigma$, and on the noise distribution (three curves in each of the three last graphics, corresponding to different distances). The abscissa is the sample size $n$. 

Figure \ref{fig:speed1} presents clearly the fact that the convergence to zero of all the errors becomes very slow when $n$ is larger than $2000$, meaning that the asymptotic speed $(\ln n/n)^{1/4}$ is reached. Errors seem to quickly decrease for small sizes. 

Figure  \ref{fig:speed2} is a log log set of graphics. The (nearly!) straight lines represent $c(\ln n /n)^{1/4}$ for five values of $c$. Except for the error on the noise distribution, which decreases faster,
the theoretical bound $n^{-1/4}$ (see Corollary \ref{rresum} with $\gamma= 1/2$) looks exact. 
\subsection{Stopping time for the iterations}
We chose to stop the backfitting iterations when the estimations are stabilized: namely, after the first $k$ such that
\begin{eqnarray*}
\max\left\{\normdeux{\hat\theta_n^{(k)}-\hat\theta_n^{(k-1)}},
N_1(b_n^{(k)}-\hat b_n^{(k-1)})\right\}\leq 10^{-3}
\end{eqnarray*}
Let us denote by $k(n)$ the obtained stopping point. As pointed out in Corollary \ref{rresum}, $k(n)$ should be of order $\ln n$, hence hardly varying in the domain $n\leq 1000$. 

For each model, and each sample size $n$, five independent trajectories are simulated.
This is illustrated in Figure \ref{fig:choixk1}, for the three models (\ref{+}), (\ref{-}), (\ref{4}). The sample size varies between $100$ and $1000$. There are three groups of 5 piecewise linear lines. Model (\ref{-}) is represented by the lines in the lower part of the graphic. For this model, the stopping point is almost constantly equal to $7$ and $8$. Models (\ref{4}) (darkest lines) and (\ref{+}) occupy the upper part. This illustrates the asymptotic theory and the observations in Figures \ref{fig:estimationFig2}, \ref{fig:estimationFig3} and  \ref{fig:estimationFig4}. 
%and \label{fig:choixk2}) suggest how to choose the design parameter as a function of the sample size $n$.

% ----------- figure 1 RapportSimulations_260610AAFig1.jpg
%\begin{figure} [p]
%	\centering
%		 %\includegraphics[width=5cm, height=2cm]{RapportSimulations_260610AAFig1.jpg}
%		 		 %\includegraphics[width=0.3\textwidth]{RapportSimulations_260610AAFig1.jpg}
%	 		 \includegraphics[scale=.3]{RapportSimulations_260610AAFig1.jpg}
%	\caption{A section of a sample path. The exogenous variable creates the 
%non stationary effect (on the trend and the volatility). It 
%is difficult to detect on the drawing the period of the exogenous 
%variable. Neither the correlation function nor the partial correlation 
%function  exhibit anything more.}
%	\label{fig:samplepath}
%\end{figure}

%---------- figure 2 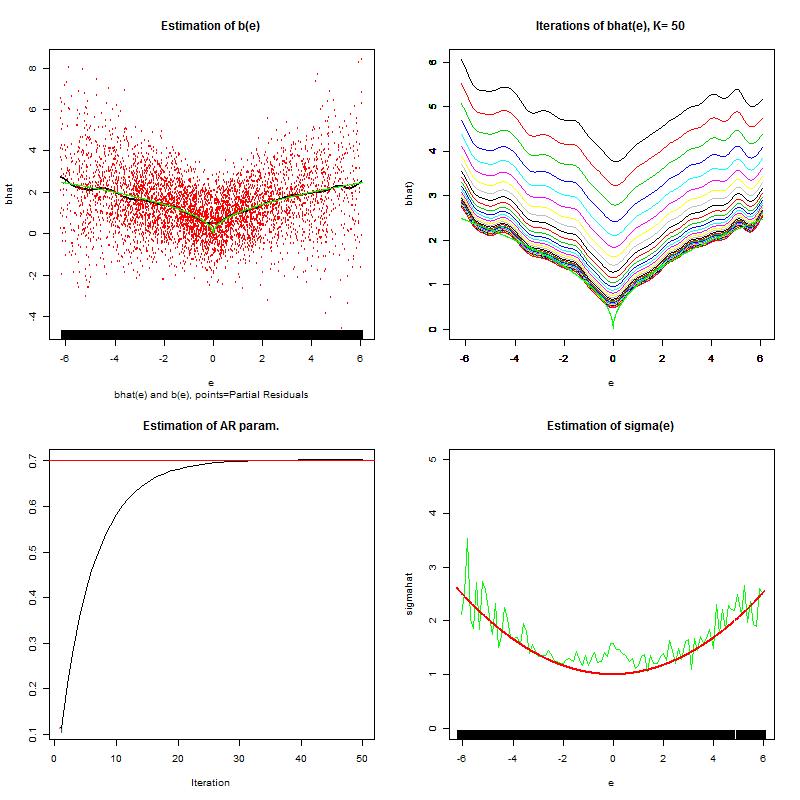
\begin{figure} [p]
	\centering
	\includegraphics[scale=.5]{}
	\caption{Estimation results for model (\ref{+}). The trajectory length is $n=5000$ and $k=50$ 
iterations are performed. The upper-right figure shows the evolution of $\hat{b}^{(k)}(e)$ 
for $k=1:50$.  The lower-left figure presents the evolution of the estimator of $\theta$ for $k=1:50$. 
The iterations stabilize very slowly, but the size of the sample is enough to obtain good estimation. The lower-right figure presents the estimator of $\sigma(e)$ for $k=50$.}
	\label{fig:estimationFig2}
\end{figure}

%---------- figure 3 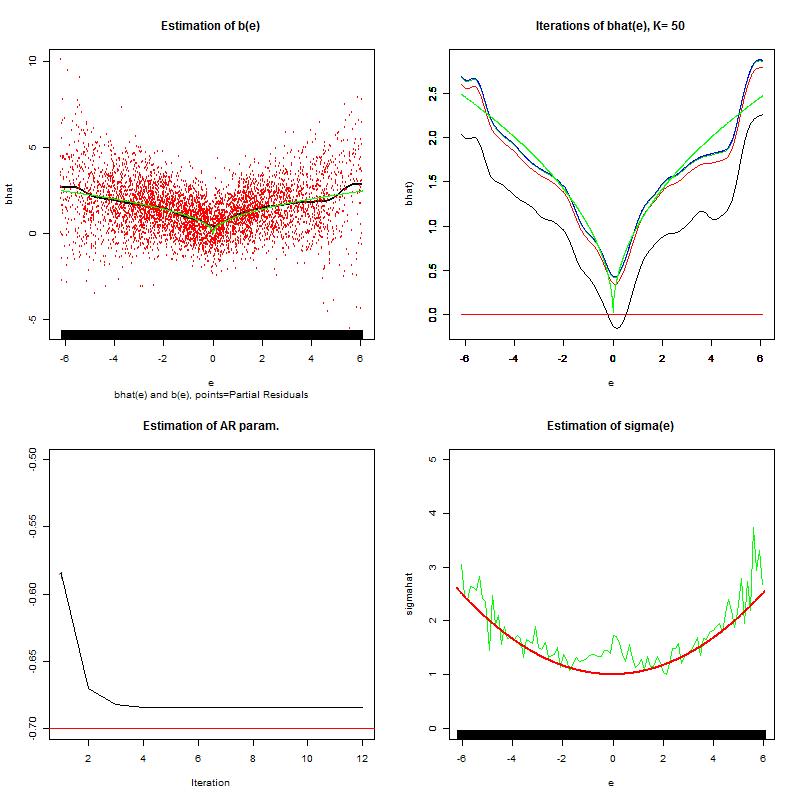
\begin{figure} [p]
	\centering
		 \includegraphics[scale=.5]{}
	\caption{Estimation results for model (\ref{-}). Few iterations are needed, but the size sample seems to be too small to obtain a good estimation of $\theta$. Nevertheless, the estimation of the functions looks satisfactory.}
	\label{fig:estimationFig3}
\end{figure}

%---------- figure 4 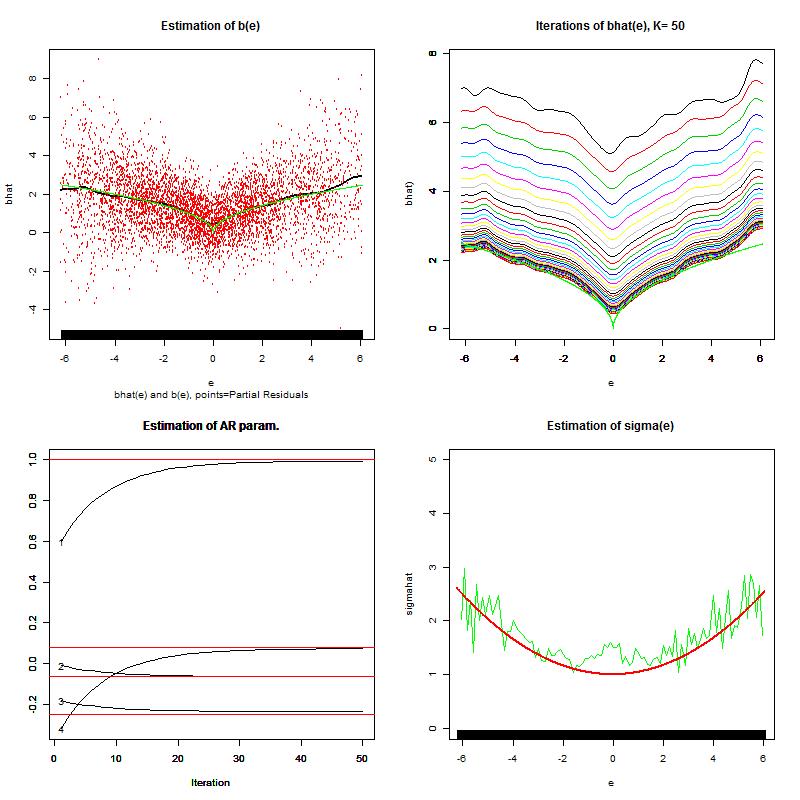
\begin{figure} [htbp]
	\centering
		 \includegraphics[scale=.5]{}
	\caption{Estimation results for model (\ref{4}). The coordinates of $\theta$
are the horizontal lines on the lower-left figure. The iterations converge slowly ($40$ iterations), and the estimations are rather good.}
	\label{fig:estimationFig4}
\end{figure}

 %\section*{Part 2 Simulation. N=inf?}

%This part tries to answer to the question: when N is large enough 
%to get close to the asymptotic results? 
%
%The characteristics of the simulated samples are:

%---------- figure 5
%\begin{figure} [htbp]
%	\centering
%		 \includegraphics[scale=.5]{RapportSimulations_300710Fig26.jpg}
%	\caption{Estimation error as a function of the size $n$ of the trajectory. From up to down and left to right: $\theta$, $b$, $\sigma$, and the distance between the estimated distribution of the noise  and the Gaussian distribution. The  model is (\ref{-}). For small $n$ (between $200$ and $1000$, the step size is $25$) the distances (mean of $5$ distances) have large values and their variation form a value of $n$ to the next one are large (due to the small number of trajectories used, here equal to $5$, to compute de mean value). The 3 distances shown on $b$ and $\sigma$  are: the $l_\infty$ distance, the usual $l_2$ distance, the $l_1$ distance. Remember that $l_2 \leq l_1 \leq l_\infty$ implies that the curves, from up to down are :  $l_\infty$ (coded $3$) then  $l_1$ (coded $1$) then $l_2$ (coded $1$).  The distances decrease slowly when $n$ increases. For the down-right figure, the distances are (top-down) are Total-Variation, Hellinger and Kolmogorov-Smirnov distances. They decrease quickly for small sample sizes and slowly afterwards.
%}
%	\label{fig:speed1}
%\end{figure}
%%%%%%%%%%%%%%%%%%%%%%%%%%%
%---------- figure 5
\begin{figure} [htbp]
	\centering
		 \includegraphics[scale=.5]{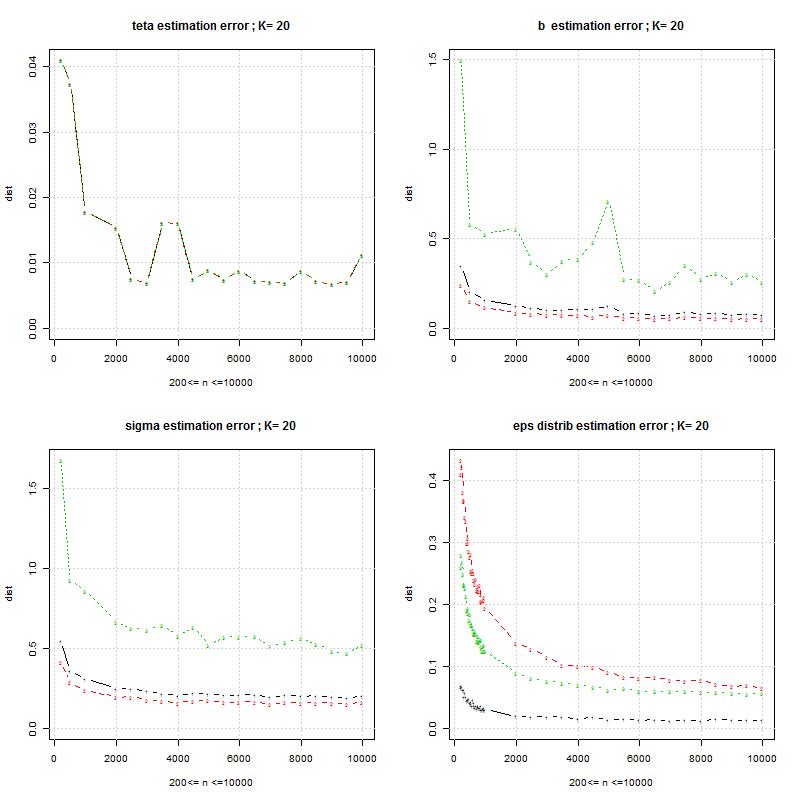}
	\caption{Estimation error as a function of the length $n$ of the trajectory. The  model is (\ref{-}). From top to bottom and left to right: errors on the estimation of $\theta$, $b$, $\sigma$, and the distance between the estimated distribution of the noise  and the Gaussian distribution. In abscissa, the two first values are $200$ and $500$. Then, the lag remains equal to $500$. In graphics 2 and 3, the positions of the three curves are conform to inequalities $N_1\leq N_2\leq N_\infty$. For the lower-right figure, the distances are (top to bottom) are Total-Variation, Hellinger and Kolmogorov-Smirnov distances. 
}
	\label{fig:speed1}
\end{figure}
\begin{figure} [htbp]
	\centering
		 \includegraphics[scale=.5]{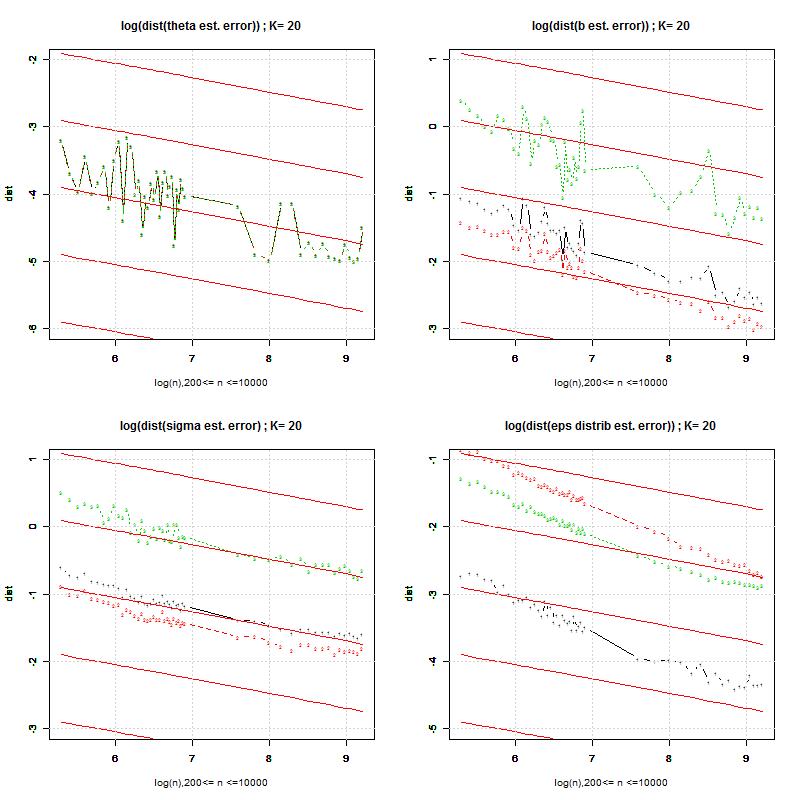}
	\caption{For model (\ref{-}), the graphics present, in log-log coordinates, the error in estimating the parameters $\theta$ (top-left), $b$ (top-right) and $\sigma$ (bottom-left) and the distance between the estimated distribution of the noise and the Gaussian distribution (bottom-right). The straight lines (almost straight, because of the term $\ln n$) show the curves  $c (\ln n/n)^{1/4}$ for several values of $c$.}
	\label{fig:speed2}
\end{figure}

%%%%%%%%%%%%%%%%%%%%%%%%%

%---------- figure 8
\begin{figure} [htbp]
	\centering
		 \includegraphics[scale=.5]{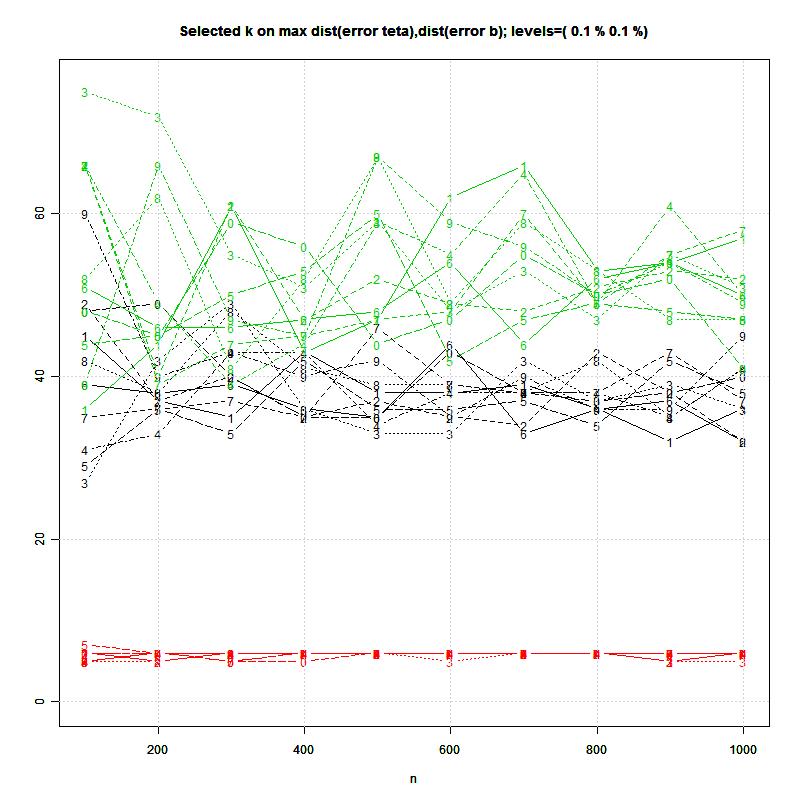}
	\caption{Value of $k(n)$ as a function of the sample size $n$ for the three models (\ref{+}) (the 5 upper lines), (\ref{-}) (the 5 lowest nearly constant lines) and   (\ref{4}) (the 5 intermediate lines).}
	\label{fig:choixk1}
\end{figure}

\newpage
\section{Appendix: Proofs}
\subsection{Preliminaries about the process $(X_n)$ and its covariances}\label{prel}

We consider the solution of (\ref{lina}) defined by the 
$MA_\infty$ expansion
\begin{equation}\label{sol}
X_{n}=\sum_{j\geq
  0}g_j\left(b(e_{n-j})+\sigma(e_{n-j})\varepsilon_{n-j}\right)\qquad n\in\Z
\end{equation}
where the geometrically vanishing sequence $(g_j)$ is defined by 
$$
\frac{1}{1-a_1z-\ldots -a_pz^p}=\sum_{j\geq 0}g_jz^j
$$

Since the sequence $s_n$ is $T$-periodic, and $\eta_n$ is i.i.d., the $T$-dimensional vector sequence
$Z_k={^t\left(X_{kT},\ldots,X_{(k+1)T-1}\right)}$ is a strictly stationary process, each
coordinate being the sum of $T$ linear scalar  processes based on $T$ independent white noises. In other words, the process $(X_n)$ is periodically correlated (see for example \cite{MR1724695} for a review on periodically correlated time series).

Hereafter, the stationarity of $Z_k$ is the key for proving convergence results via the law of large numbers.
\subsection{Proof of Lemma \ref{cov}}

The proof consists in separating the sequence $(\phi_k)$ into the $T$ stationary and ergodic subsequences $\{(\phi_k^{(l)})\vert l=0,\ldots,T-1\}$ defined in (\ref{fil}) and using the law of large numbers. Details are omitted. 

To check regularity of the limit $M$, consider the vector sequences $(\psi_k)$ and $(\psi_k^{(l)})$ built from 
$$
Y_n=\sum_{j\geq 0}g_j\sigma(e_{n-j})\varepsilon_{n-j}
$$
exactly as $(\phi_k)$ and $(\phi_k^{(l)})$ are built from $(X_n)$.  Similarly, consider the sequences $(\psi_k')$ and $(\psi_k'^{(l)})$ built from $
Y'_n=\sum_{j\geq 0}g_jb(e_{n-j}).
$
Denoting by $\Gamma'^{(l)}$ the covariance matrix of $(\psi_k'^{(l)})$, and noticing that the sequences $(\psi_k)$ and $(\psi_k')$ are orthogonal,  
$$
\Gamma^{(l)}=\Gamma'^{(l)}+\E\left(\psi_0^{(l)}{^t\psi_0^{(l)}}\right)
$$
Hence, if $M$ is singular, the same holds for $\sum_{l=0}^{T-1}\E\left( \psi_0^{(l)}{^t\psi_0^{(l)}} \right)$. This in turn implies that there exist $(c_1,\ldots,c_p)$ such that, for every $k$,
\begin{equation}\label{S}
c_1 S_k+\cdots+c_p S_{k-p+1}=_{as} 0
\end{equation}
where $S_k=Y_k+\ldots+Y_{k-T+1}$ is the sum of the $Y$'s over a period of the input $e_n$. Now, it is clear that $S_k$ is a stationary ARMA process having the representation
$$
S_k=a_1S_{k-1}+\ldots+a_pS_{k-p+1}+\sum_{j=0}^{T-1}\sigma(e_{k-j})\varepsilon_{k-j},
$$
where, from (\ref{varnoise}), the variance of the noise is not zero.
This contradicts (\ref{S}).

\subsection{Proof of Theorem \ref{resum0}}
%%%%%%%%%%%%%%%%%%%%%%%%%%%%%%%%%%%%%%%%

Most proofs below are classical in the field of kernel functional
estimation. This is why some details are omitted. The reader can refer to \cite{Bosq1}, \cite{Fan2} or \cite{Fer} for complete devlopments.
%%%%%%%%%%%%%%%%%%%%%%%%%%%%%%%%%%%%%%%%%%%%%%%%%%%%%%%%%%
\subsubsection{Convergence of $R_n^{(1)}$ and $R_n^{(2)}$}
\begin{Lem}\label{R12}
Under the assumptions $\mathcal H_{1,\ldots,8}$, and if the smoothing parameter $h_n$ is such that  $h_n\sim n^{\beta_1}(\ln n)^{\beta_2}$ with some $\beta_1<0$
%$$
%h_n\sim n^{\beta_1}(\ln n)^{\beta_2},\;\hbox{with some} \;\beta_1<0,
%$$
we have
$$
R_n^{(1)}=O_{a.s.}\left(\sqrt \frac{\ln n}{nh_n}\right)+O(h_n^{\gamma})
$$
\end{Lem}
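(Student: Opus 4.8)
First I would rewrite the summand of (\ref{1}) as a Nadaraya--Watson type quantity: setting
$$
m_n(e)=\frac{\sum_{j=p+1}^{n}\bigl(b(e_j)-b(e)+\sigma(e_j)\varepsilon_j\bigr)K_n(e-e_j)}{\sum_{j=p+1}^{n}K_n(e-e_j)},
$$
one has $R_n^{(1)}=\Sigma_n^{-1}\sum_{l=p+1}^{n}\phi_l\,m_n(e_l)$, hence
$$
\normdeux{R_n^{(1)}}\le\normdeux{\Sigma_n^{-1}}\,\Bigl(\sum_{l=p+1}^{n}\normdeux{\phi_l}\Bigr)\,\sup_{e\in\mathcal E}\abs{m_n(e)}.
$$
By Lemma \ref{cov}, $\Sigma_n/n\tops M$ with $M$ regular, so $\normdeux{\Sigma_n^{-1}}=O_{a.s.}(1/n)$; splitting $(\phi_l)$ into the $T$ stationary ergodic subsequences $(\phi_k^{(l)})$ of (\ref{fil}) and invoking the ergodic theorem (the moments of $X_n$ being finite by (\ref{sol}) and the boundedness in $\mathcal H_{1,2}$) gives $\tfrac1n\sum_{l}\normdeux{\phi_l}=O_{a.s.}(1)$. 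Since $e_l\in\mathcal E$, the problem reduces to showing $\sup_{e\in\mathcal E}\abs{m_n(e)}=O_{a.s.}(\sqrt{\ln n/(nh_n)})+O(h_n^{\gamma})$.

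\textbf{Denominator and bias.} Next I would split $m_n=B_n+V_n$, with $B_n$ carrying the $b(e_j)-b(e)$ terms and $V_n$ the $\sigma(e_j)\varepsilon_j$ terms, over the common denominator $D_n(e)=\sum_jK_n(e-e_j)$. Applied to each residue class modulo $T$, the classical uniform consistency of kernel density estimators (with $h_n\to0$ since $\beta_1<0$, using that $f$ is continuous and bounded below on $[-m_\eta,m_\eta]$ by $\mathcal H_5$, and the Lipschitz property of $K$ for the uniformity in $e$) gives $\inf_{e\in\mathcal E}(nh_n)^{-1}D_n(e)\ge c>0$ a.s.\ for large $n$. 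For $B_n$, $\mathcal H_6$ yields $\abs{b(e_j)-b(e)}K_n(e-e_j)\le Ch_n^{\gamma}\abs{v}^{\gamma}\abs{K(v)}$ at $v=(e-e_j)/h_n$, and since $\int\abs{v}^{\gamma}\abs{K(v)}\d v<\infty$ the same change of variable together with the lower bound on $D_n$ give $\sup_{e\in\mathcal E}\abs{B_n(e)}=O(h_n^{\gamma})$ deterministically.

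\textbf{Stochastic term.} This is the core step. Fixing $e$ and conditioning on the whole sequence $(\eta_j)_{j\in\Z}$, the $\varepsilon_j$ remain i.i.d., centred and bounded by $\mathcal H_{2,4}$, while the weights $\sigma(e_j)K_n(e-e_j)$ become constants, bounded by $\mathcal H_{6,8}$, with squares summing to $O(nh_n)$ a.s.\ (same ergodic argument). Bernstein's inequality then gives, for $D$ large enough, $\Pr\bigl(\abs{\sum_j\sigma(e_j)\varepsilon_jK_n(e-e_j)}>D\sqrt{nh_n\ln n}\bigr)\le n^{-3}$ uniformly in $e$; covering $\mathcal E$ by polynomially many points at spacing $\sim h_n/n^2$ and controlling oscillation via the Lipschitz constant $O(n/h_n)$ of the numerator ($\mathcal H_8$), a union bound plus Borel--Cantelli along a suitable subsequence (with interpolation) yield $\sup_{e\in\mathcal E}\abs{\sum_j\sigma(e_j)\varepsilon_jK_n(e-e_j)}=O_{a.s.}(\sqrt{nh_n\ln n})$. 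Dividing by $D_n(e)\asymp nh_n$ gives $\sup_{e\in\mathcal E}\abs{V_n(e)}=O_{a.s.}(\sqrt{\ln n/(nh_n)})$, which together with the previous paragraph proves the lemma; since $R_n^{(1)}$ does not involve $k$, the bound is trivially uniform in $k$.

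\textbf{Main obstacle.} The delicate point is exactly this uniform, almost-sure control of $V_n$ at the sharp rate $\sqrt{\ln n/(nh_n)}$: one must marry an exponential inequality (here made transparent by conditioning on the $\eta$'s, which turns the numerator into a sum of conditionally independent bounded centred terms), a chaining/covering argument over $\mathcal E$ through the Lipschitz kernel, and the passage from convergence in probability to almost sure convergence via Borel--Cantelli, while keeping track that the effective number of summands is $nh_n$ and not $n$ (so the relevant regime is $nh_n/\ln n\to\infty$). A minor caveat: the bias bound uses $\int\abs{v}^{\gamma}\abs{K(v)}\d v<\infty$, i.e.\ a little decay of $K$ beyond integrability, which holds for the compactly supported or Gaussian kernels used in the simulations.
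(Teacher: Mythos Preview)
Your proof is correct and follows essentially the same route as the paper: reduce via Lemma~\ref{cov} and the ergodic theorem to a uniform bound on the Nadaraya--Watson ratio $m_n(e)$, bound the denominator away from zero, and control the stochastic part of the numerator by an exponential inequality combined with a covering argument exploiting the Lipschitz kernel. The only differences are cosmetic: the paper centres the $b$-part first (splitting it into a fluctuation term $N_1$ and a pure bias $N_2=O(h_n^{\gamma})$) instead of bounding it directly as you do, and it applies the Hoeffding/Bernstein inequality straight to the already independent summands $(e_j,\varepsilon_j)$ rather than conditioning on the exogeneous sequence---your conditioning works but is one step more than needed here.
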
 
\begin{proof}
Given the convergence of $\Sigma_n/n$ to a regular matrix, it is enough to prove the wanted result for
\begin{equation}\label{1+2}
\frac{1}{n}\sum_{l=p+1}^{n}\phi_l
\frac{\sum_{j=p+1}^{n}\left(b(e_j)-b(e_l)+\sigma(e_j)\varepsilon_j\right)K_n\left(e_l-e_j\right)}{\sum_{j=p+1}^{n}K_n\left(e_l-e_j\right)}.
\end{equation}
We prove the uniform convergence
\begin{equation}\label{unifnum}
\sup_e\Big\vert\frac{\sum_{j=p+1}^{n}\left(b(e_j)-b(e)+\sigma(e_j)\varepsilon_j\right)K_n\left(e-e_j\right)}{\sum_{j=p+1}^{n}K_n\left(e-e_j\right)}\Big\vert=
O_{a.s.}\left(\sqrt \frac{\ln n}{nh_n}\right)+O(h_n^{\gamma}).
\end{equation}

The result will then follow from the fact that, thanks to the law of large numbers applied to each subsequence $(\phi_k^{(l)})_k,\; (k=0,\ldots,T-1)$, the arithmetic mean  $n^{-1}\sum_{p+1}^{n}\phi_k$ almost surely converges.
\end{proof}
In order to prove (\ref{unifnum}) we only consider
\begin{equation}\label{b}
\frac{\sum_{j=p+1}^{n}\left(b(e_j)-b(e)\right)K_n\left(e-e_j\right)}{\sum_{j=p+1}^{n}K_n\left(e-e_j\right)}=\frac{\frac{1}{nh_n}\sum_{j=p+1}^{n}\left(b(e_j)-b(e)\right)K_n\left(e-e_j\right)}{\frac{1}{nh_n}\sum_{j=p+1}^{n}K_n\left(e-e_j\right)}.
\end{equation}
The treatment of the other part in (\ref{1+2}) is simpler since 
$\E(\sigma(e_j)\varepsilon_jK_n\left(e-e_j\right))=0$ for every $j$. 

$\bullet$ Consider first the numerator of (\ref{b}) conveniently split in two parts: a variance term and a biais trem

\begin{eqnarray*}\label{var}
N_1(e)&=&\frac{1}{nh_n}\sum_{j=p+1}^{n}(b(e_j)-b(e))K_n\left(e-e_j\right)-\E\left[(
(b(e_j)-b(e))K_n\left(e-e_j\right)\right]
\end{eqnarray*}
%\begin{eqnarray}\label{var}
%&&\frac{1}{nh_n}\sum_{j=p+1}^{n}(b(e_j)-b(e))K_n\left(e-e_j\right)-\E\left[(
%(b(e_j)-b(e))K_n\left(e-e_j\right)\right]\nonumber\\
%&=:& N_1(e)
%\end{eqnarray}
and
\begin{equation*}\label{biais}
N_2(e)=\frac{1}{nh_n}\sum_{j=p+1}^{n}\E\left[((b(e_j)-b(e))K_n\left(e-e_j\right)\right].\nonumber
\end{equation*}
For  the so-called variance term $N_1(e)$,
the basic tool is the  exponential inequality 

\begin{equation}\label{Hoe}
P\left(\left\vert\frac{\sum_{j=1}^nU_j}{n}\right\vert>\epsilon\right)\leq
2e^{-\frac{n\epsilon^2}{4\delta^2}}\quad \forall \epsilon\in]0,3\delta^2/d[,
\end{equation}
which holds for every set $(U_1,\ldots,U_n)$ of independent zero-mean variables such
that $\vert U_j\vert\leq d$ and $\E(U_j^2)\leq \delta^2$  $(j=1,\ldots, n)$. This inequality is easily deduced from Bernstein's one as noticed in \cite{Hoef}, page 17.

Looking at the independent sequence
$$
U_j=\frac{1}{h_n}\left((b(e_j)-b(e))K_n(e_j-e)-\E((b(e_j)-b(e))K_n(e_j-e))\right), 
$$
firstly, since  $b$ and  $K$ are bounded, it is clear that
$$
\vert U_j\vert\leq \frac{c}{h_n},
$$
and secondly
\begin{eqnarray*}
\E(U_j^2)&\leq& \frac{1}{h_n^2}\int(b(u)-b(e))^2K^2\left(\frac{u-e}{h_n}\right)f(u)du\\
&=&\frac{1}{h_n}\int(b(vh_n+e)-b(e))^2K^2(v) f(vh_n+e) dv\leq \frac{c}{h_n}.
\end{eqnarray*}
Applying inequality (\ref{Hoe}) with $d=\delta^2=c/h_n$  we obtain
$$
P\left(\vert N_1(e)\vert>\epsilon\right)\leq 2e^{-\frac{nh_n\epsilon^2}{4c}}\quad 0<\varepsilon<1
$$
and then
$$
P\left(\vert N_1(e)\vert>\epsilon_0\sqrt\frac{\ln n}{nh_n}\right)\leq
2e^{-\frac{\varepsilon_0^2\ln n}{4c}}.
$$
A suitable choice of $\epsilon_0$ yields summability of the r.h.s. and finally, by Borel Cantelli Lemma
$$
N_1(e)=O_{a.s.}\left(\sqrt\frac{\ln n}{nh_n}\right).
$$

We now turn to the biais term $N_2(e)$. From (\ref{Hol}), 
\begin{eqnarray*}
N_2(e)&=&\frac{1}{h_n}\int(b(u)-b(e))K\left(\frac{u-e}{h_n}\right)f(u)du\\
&=&\int (b(vh_n+e)-b(e))K(v)f(vh_n+e)dv=O(h_n^\gamma).
\end{eqnarray*}
We have thus proved that 
\begin{eqnarray*}
N_1(e)+N_2(e)&=&\frac{1}{nh_n}\sum_{j=p+1}^{n}\left(b(e_j)-b(e)\right)K_n\left(e-e_j\right)\\
&=&O_{a.s.}\left(\sqrt\frac{\ln n}{nh_n}\right)+O(h_n^\gamma).
\end{eqnarray*}
The same rate for $\sup_e (\vert N_1(e)+N_2(e)\vert)$ is obtained by covering the domain of $e$ by well chosen intervals and using Lipschitz property of the kernel. See \cite{Bosq1} and \cite{Fer} among others for the details.

$\bullet$ A similar treatment leads to
\begin{equation}\label{den}
\sup_{e\in \mathcal E}\Bigg\vert
\frac{\sum_{j=p+1}^{n}K_n\left(e-e_j\right)}{nh_n}-\frac{
\sum_{l=0}^{T-1}f(e-s_l)}{T}
\Bigg\vert\tops 0
%=O_{a.s.}\left(\sqrt\frac{\ln n}{nh_n}\right)+O(h_n^\gamma).
\end{equation}
This, together with the fact that $\inf _{e\in \mathcal E} f(e)>0$, leads to
\begin{eqnarray*}
&&\sup_l\Bigg\vert\frac{\sum_{j=p+1}^{n}\left(b(e_j)-b(e_l)\right)K_n\left(e_l-e_j\right)}{\sum_{j=p+1}^{n}K_n\left(e_l-e_j\right)}\Bigg\vert\\
&\leq&
\sup_{e\in \mathcal E}\Bigg\vert\frac{\sum_{j=p+1}^{n}\left(b(e_j)-b(e)\right)K_n\left(e-e_j\right)}{\sum_{j=p+1}^{n}K_n\left(e-e_j\right)}\Bigg\vert
=O_{a.s.}\left(\sqrt\frac{\ln n}{nh_n}\right)+O(h_n^\gamma)
\end{eqnarray*}
and the proof of (\ref{unifnum}) is over.

Let us now consider the convergence of $R_n^{(2)}$:
\begin{Lem}\label{R3}
Under the assumptions $\mathcal H_{1,2,3,4}$, 
$$
R_n^{(2)}=o_{a.s.}\left({n^{\gamma}}\right)\quad \forall \gamma>-1/2
$$
\end{Lem}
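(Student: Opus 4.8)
The plan is to mimic the reduction used for Lemma \ref{R12}. By Lemma \ref{cov} the matrix $\Sigma_n/n$ converges almost surely to the regular matrix $M$, so $\|\Sigma_n^{-1}\|=O_{a.s.}(1/n)$, and it suffices to prove that
$$
M_n:=\sum_{l=p+1}^{n}\phi_l\,\sigma(e_l)\,\varepsilon_l=o_{a.s.}\big(n^{1/2+\rho}\big)\quad\text{for every }\rho>0,
$$
since then $R_n^{(2)}=\Sigma_n^{-1}M_n=o_{a.s.}(n^{-1/2+\rho})$ for every $\rho>0$, which is precisely $o_{a.s.}(n^{\gamma})$ for every $\gamma>-1/2$.

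The first step is to identify $(M_n)$ as a vector martingale. Let $\mathcal G_n=\sigma(\varepsilon_j:j\leq n)\vee\sigma(\eta_j:j\in\Z)$. From the $MA_\infty$ representation (\ref{sol}) with geometrically vanishing $(g_j)$, each coordinate of $\phi_l=(X_{l-1},\dots,X_{l-p})$ is a measurable function of $(\varepsilon_j)_{j\leq l-1}$ and $(\eta_j)_{j\in\Z}$, hence $\mathcal G_{l-1}$-measurable, and so is $\sigma(e_l)=\sigma(s_l+\eta_l)$; by $\mathcal H_2$ and $\mathcal H_4$ the variable $\varepsilon_l$ is independent of $\mathcal G_{l-1}$ with $\E(\varepsilon_l)=0$ and $\E(\varepsilon_l^{2})=1$. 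Hence $\E(\phi_l\sigma(e_l)\varepsilon_l\mid\mathcal G_{l-1})=0$, the increments are orthogonal, and they are square integrable: $\phi_l$ depends only on $(\varepsilon_j,\eta_j)_{j\leq l-1}$ and is therefore independent of $\eta_l$, so $\E(\|\phi_l\|^{2}\sigma^{2}(e_l))=\E\|\phi_l\|^{2}\cdot\E\sigma^{2}(e_l)$, which is bounded in $l$ (by $T$-periodicity and Lemma \ref{cov}).

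Next I would run a Kronecker (Chow) argument. Fix $\rho>0$ and set $N_m=\sum_{l=p+1}^{m}l^{-1/2-\rho}\,\phi_l\sigma(e_l)\varepsilon_l$, again a $(\mathcal G_m)$-martingale; by orthogonality of its increments, $\sup_m\E\|N_m\|^{2}=\sum_{l>p}l^{-1-2\rho}\,\E(\|\phi_l\|^{2}\sigma^{2}(e_l))\leq C\sum_{l>p}l^{-1-2\rho}<\infty$. Thus $N_m$ converges almost surely (Doob's $L^{2}$ convergence theorem), and Kronecker's lemma yields $n^{-1/2-\rho}M_n\to 0$ a.s., that is $M_n=o_{a.s.}(n^{1/2+\rho})$; since $\rho>0$ is arbitrary, the claimed bound on $R_n^{(2)}$ follows. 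If one adds the boundedness of $b$ and $\sigma$ on $\mathcal E$, the increments $\phi_l\sigma(e_l)\varepsilon_l$ become bounded and an Azuma--Hoeffding inequality, combined with the $n=2^{k}$ subsequence plus maximal-inequality device already used for $N_1$, even gives the sharper $R_n^{(2)}=O_{a.s.}(\sqrt{\ln n/n})$; but only the weaker statement is needed here.

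The one delicate point is the choice of filtration: because $X_n$ depends on the whole infinite past of $(\varepsilon_j)$ and, through the $e_{n-j}$'s, on a two-sided family of $\eta_j$'s, one must place all of $\sigma(\eta_j:j\in\Z)$ inside $\mathcal G_n$ so that $\phi_l$ and $\sigma(e_l)$ are $\mathcal G_{l-1}$-measurable while $\varepsilon_l$ stays independent of $\mathcal G_{l-1}$ with zero mean. Once this is arranged, the rest — the reduction through $\Sigma_n^{-1}$, orthogonality of the increments, and Kronecker's lemma — is routine and parallels the treatment of the martingale part of $R_n^{(1)}$ above.
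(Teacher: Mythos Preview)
Your argument is correct and follows essentially the same route as the paper: identify $\phi_l\sigma(e_l)\varepsilon_l$ as a square-integrable martingale difference sequence with periodically bounded second moments, apply a martingale strong law (the paper cites Stout's theorem 3.3.1, which is precisely the $L^2$-bounded-martingale/Kronecker argument you spell out), and conclude via $\Sigma_n/n\to M$. Your explicit choice of filtration $\mathcal G_n=\sigma(\varepsilon_j:j\le n)\vee\sigma(\eta_j:j\in\Z)$ is a welcome clarification that the paper leaves implicit.
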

\begin{proof}
The vector sequence $\phi_k\sigma(e_k)\varepsilon_k$ is a martingale difference sequence since $\E(\varepsilon_k)=0$ and since $\phi_k e_k$ and  $\varepsilon_k$ are independent. Moreover,
$$
\E(\normdeux{\phi_k\sigma(e_k)\varepsilon_k}^2)=\sigma^2\E\left(b(e_k)^2\right)\E(\normdeux{\phi_k}^2).
$$
where $\E(\normdeux{\phi_k}^2)$ and $\E\left(b(e_k)^2\right)$ are
periodic. Hence, for every $\beta>1/2$
$$
\sum\frac{\E(\normdeux
  {\phi_k\sigma(e_k)\varepsilon_k}^2)}{k^{2\beta}}<\infty, 
$$
implying, from theorem 3.3.1 of \cite{Stout},
$$
n^{-\beta}\sum_{p+1}^n\phi_k\sigma(e_k)\varepsilon_k\tops 0.
$$ 
Finally, the convergence of $\Sigma_n/n$ leads to the conclusion. 
\end{proof}
\subsubsection{Convergence of the coefficient $A_n$}
We prove the convergence of $A_n$, the matrix coefficient of $\tilde \theta_n ^{(k-1)}$ in (\ref{errtheta}). 
\begin{Lem}\label{AAPE}
Under the assumptions $\mathcal H_{1,\ldots,8}$,\\
(i) As $n\to\infty$, 
\begin{eqnarray*}
A_n&=&\Sigma_n^{-1}\left(\sum_{l=p+1}^n\phi_l
\frac{\sum_{j=p+1}^n{^t\phi_j}K_n(e_l-e_j)}{\sum_{j=p+1}^n K_n(e_l-e_j)}\right)\\
&\tops& M^{-1}\sum_{l,j=0}^{T-1}\mu^{(l)}{^t\mu^{(j)}}\int\frac{f(u-s_j)f(u-s_l)}{\sum_{i=0}^{T-1}f(u-s_i)}du =:A
\end{eqnarray*}
where $M$ is defined in Lemma \ref{cov}.\\
(ii) Moreover $\norm{A_n-A}= 0_{as}\left(\sqrt \frac{\ln n}{nh_n}\right)+O(h_n^{\gamma})$. 
\end{Lem}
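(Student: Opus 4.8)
The plan is to treat the numerator and denominator of the inner ratio separately, using the same kernel‐estimation machinery already set up in the proof of Lemma \ref{R12}, and then to assemble $A_n$ from these pieces via the convergence $\Sigma_n/n \tops M$ from Lemma \ref{cov}. First I would write, for each fixed index $l$,
\begin{equation*}
\frac{\sum_{j=p+1}^n {^t\phi_j}K_n(e_l-e_j)}{\sum_{j=p+1}^n K_n(e_l-e_j)}
=\frac{\frac{1}{nh_n}\sum_{j=p+1}^n {^t\phi_j}K_n(e_l-e_j)}{\frac{1}{nh_n}\sum_{j=p+1}^n K_n(e_l-e_j)},
\end{equation*}
and show that the denominator converges uniformly in $e$ to $T^{-1}\sum_{i=0}^{T-1}f(e-s_i)$ — this is exactly (\ref{den}), already established — while the numerator converges uniformly to $T^{-1}\sum_{j=0}^{T-1}{^t\mu^{(j)}}f(e-s_j)$. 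The latter is the new ingredient: one splits $\sum_j {^t\phi_j}K_n(e-e_j)$ into a bias term (controlled by the Hölder/Lipschitz continuity of $f$, giving $O(h_n^\gamma)$ after the change of variables $e_j = vh_n+e$) and a centered variance term, which is handled exactly as $N_1(e)$ was, via the exponential inequality (\ref{Hoe}) applied subsequence by subsequence (the $T$ stationary and ergodic subsequences $(\phi_k^{(l)})_k$), Borel--Cantelli, and the covering argument with the Lipschitz kernel to pass to the supremum over $e$. Each subsequence contributes its mean $\mu^{(l)}$ weighted by the local density $f(e-s_l)$, and summing gives the announced limit.

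Next I would plug $e=e_l$ into this ratio, multiply by $\phi_l$, average $n^{-1}\sum_{l=p+1}^n$, and split that outer sum again into the $T$ subsequences $(\phi_k^{(l)})$. Along the subsequence indexed by $l\equiv r \pmod T$, the summand is $\phi_l \cdot \big(\text{ratio evaluated at } e_l\big)$, and since the ratio converges uniformly to the deterministic function $e\mapsto \sum_j {^t\mu^{(j)}}f(e-s_j)/\sum_i f(e-s_i)$, the ergodic theorem applied to $\phi_k^{(r)}$ against this bounded continuous function yields
\begin{equation*}
\E\!\left(\phi_0^{(r)}\,\frac{\sum_{j=0}^{T-1}{^t\mu^{(j)}}f(e_0^{(r)}-s_j)}{\sum_{i=0}^{T-1}f(e_0^{(r)}-s_i)}\right).
\end{equation*}
Here one uses the independence of $\phi_0^{(r)}$ from the exogenous coordinate determining the ratio only in expectation; more carefully, one conditions on the input sequence $(e_n)$ — under $\mathcal H_4$ the vector $\phi_k$ is built from $(e_n)$ and the \emph{independent} noise $(\varepsilon_n)$ — so that $\E(\phi_l \mid (e_n)) = \E(\phi_l \mid \text{relevant } e\text{'s})$ and the $s_r$-shifted input has density $f(\cdot - s_r)$. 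Integrating the resulting deterministic expression against $f(u-s_r)\,du$ and summing over $r$ reproduces $\sum_{l,j=0}^{T-1}\mu^{(l)}{^t\mu^{(j)}}\int f(u-s_j)f(u-s_l)\big/\sum_i f(u-s_i)\,du$. Finally, multiplying on the left by $\Sigma_n^{-1} = (n^{-1}\Sigma_n)^{-1}/n$ and using Lemma \ref{cov}(i)--(ii) (so that $(\Sigma_n/n)^{-1}\tops M^{-1}$) gives part (i).

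For part (ii), the rate, I would track the error terms through the decomposition above: the denominator and numerator each converge at rate $O_{a.s.}(\sqrt{\ln n / (nh_n)}) + O(h_n^\gamma)$ by the arguments already used for (\ref{unifnum}) and (\ref{den}); the ratio inherits this rate because its denominator is bounded away from zero (by $\inf_{e\in\mathcal E} f(e)>0$ and $\mathcal H_5$); the outer average $n^{-1}\sum \phi_l(\cdots)$ adds only an ergodic-convergence error of order $o_{a.s.}(n^{-1/2+\delta})$, which is dominated; and $\Sigma_n^{-1}$ contributes $O_{a.s.}(\sqrt{\ln n/n})$ by the same exponential-inequality/Borel--Cantelli argument applied to $\Sigma_n/n - M$, again dominated by $\sqrt{\ln n/(nh_n)}$ since $h_n\to 0$. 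Assembling, $\norm{A_n - A} = O_{a.s.}(\sqrt{\ln n/(nh_n)}) + O(h_n^\gamma)$. The main obstacle I anticipate is not any single estimate — all the pieces are standard — but the bookkeeping of the conditioning argument that lets one replace the random ratio evaluated at $e_l$ by its deterministic limit \emph{uniformly}, and then justify the ergodic average of the product $\phi_l \times (\text{ratio at }e_l)$: one must be careful that the uniform convergence of the ratio holds on the (random but a.s.\ eventually fixed) set of values $\{e_l\}$, which is why the supremum over all $e\in\mathcal E$ in (\ref{unifnum}) and (\ref{den}) is doing real work here.
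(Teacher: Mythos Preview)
Your overall architecture matches the paper's: decompose $R_n(e)$ into numerator and denominator, establish uniform convergence of each, plug in $e=e_l$, average against $\phi_l$, split into the $T$ subsequences, and finish with $n\Sigma_n^{-1}\tops M^{-1}$. The denominator, the bias part of the numerator, and the final ergodic-average step are all essentially as in the paper.

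There is, however, a genuine gap in your treatment of the \emph{variance part} of the numerator. You propose to control
\[
\frac{1}{nh_n}\sum_{j=p+1}^n\Bigl({^t\phi_j}K_n(e-e_j)-\E\bigl({^t\phi_j}K_n(e-e_j)\bigr)\Bigr)
\]
``exactly as $N_1(e)$ was, via the exponential inequality (\ref{Hoe}) applied subsequence by subsequence.'' But (\ref{Hoe}) is Hoeffding/Bernstein for \emph{independent} summands, and the variables ${^t\phi_j}K_n(e-e_j)$ are not independent, even after splitting into the $T$ stationary subsequences: $\phi_j$ is built from $X_{j-1},\ldots,X_{j-p}$, which depend on the whole past of both input noises through the $MA_\infty$ representation (\ref{sol}). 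In the proof of Lemma \ref{R12} this issue does not arise because $N_1(e)$ involves only $(e_j)_j$, which \emph{is} an independent sequence; the presence of $\phi_j$ here is exactly the new difficulty.

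The paper closes this gap by conditioning on the exogenous sequence $\overline E=(e_j)_j$: conditionally, $(X_n)$ is a linear process in the i.i.d.\ noise $(\varepsilon_n)$ with deterministic (time-varying) coefficients, and Gorodetskii's theorem yields geometric $\alpha$-mixing, uniformly in $\overline E$ (this is Lemma \ref{mix0}). One then replaces (\ref{Hoe}) by Rio's exponential inequality (\ref{expmix}) for strong-mixing sequences, together with the covariance bound $s_n^2=O(nh_n)$, to obtain the $O_{a.s.}\bigl(\sqrt{\ln n/(nh_n)}\bigr)$ rate for the centered numerator. Without this mixing step, your proposed application of (\ref{Hoe}) is not justified, and the rate in part (ii) is not established.
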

\begin{proof}
We consider first 
$$
R_n(e):=\frac{\sum_{j=p+1}^n{^t\phi_j}K_n\left(e-e_j\right)}{\sum_{j=p+1}^{n}K_n\left(e-e_j\right)}=\frac{\frac{\sum_{j=p+1}^n{^t\phi_j}K_n\left(e-e_j\right)}{nh_n}}{\frac{\sum_{j=p+1}^{n}K_n\left(e-e_j\right)}{nh_n}}.
$$
The denominator has been already treated in the proof of Lemma \ref{R12} (see
(\ref{den})), so we focus on the numerator 
and successively show that
\begin{equation}\label{num}
\sup_{e\in \mathcal E}\Bigg\vert \frac{\sum_{j=p+1}^n{^t\phi_j}K_n\left(e-e_j\right)-\E(^t\phi_j K_n\left(e-e_j\right))}{nh_n}\Bigg\vert=O_{a.s.}\left(\sqrt\frac{\ln n}{nh_n}\right),
\end{equation}
then, with $\phi_k^{(l)}$ defined in (\ref{fil}),

\begin{equation}\label{numesp}
\sup_{e\in \mathcal E}\Bigg\vert \frac{\sum_{j=p+1}^{n}\E(^t\phi_j K_n\left(e-e_j\right))}{nh_n}-\frac{\sum_{l=0}^{T-1}{^t\mu^{(l)}} f(e-s_l)}{T}\Bigg\vert=O(h_n^\gamma)
\end{equation}
%and finally
%\begin{equation}\label{den}
%\sup_{e\in \mathcal E}\Big\vert\frac{\sum_{j=p+1}^{n}K_n\left(e-e_j\right)}{nh_n}-f(e)\Big\vert=O_{a.s.}\left(\sqrt\frac{\ln n}{nh_n}\right)+O(h_n^\gamma).
%\end{equation}

The proof of (\ref{numesp}) uses $\int K(e)de=1$. The details are omitted. 
The proof of (\ref{num}) follows the lines of the proof of (\ref{unifnum}),
the difference coming from the fact that the $(\phi_j
K_n(e_{j}-e))_k$ are not independent. In fact, they are weakly dependent in so far as, conditionally to the exogeneous sequence, they are mixing.

\begin{Lem}\label{mix0}~

(i) For every $e\in\mathcal E$ and every $h$, the vector sequence $(\phi_j
K(\frac{e_{j}-e}{h}))_j$ is, conditionally to the sequence $(e_j)_j=:\overline E$, geometrically $\alpha$-mixing. 

(ii) This property holds uniformly with respect to $\overline E$: there exists a constant $C$ and $\alpha\in ]0,1[$ such that, $\alpha^{\overline E}(n)$ being the conditional mixing sequence,
$$
\alpha^{\overline E}(n)\leq C \alpha^n\qquad\forall n.
$$
\end{Lem}
\begin{proof} Consider for example the first coordinate $K(\frac{e_{j}-e}{h})X_{j-1}$ of the vector sequence. Conditionally to $\overline E$, the sequence $K(\frac{e_{j}-e}{h})$ is deterministic, and it is enough to consider the sequence $X_{j}$ which has the same conditional mixing coefficients as $K(\frac{e_{j}-e}{h})X_{j-1}$. From (\ref{sol})
\begin{equation*}
X_{n}=\sum_{j\geq 0}g_j\left(b(e_{n-j})+\sigma(e_{n-j})\varepsilon_{n-j}\right)
\end{equation*}
is a linear time series based on the bounded noise 
$b(e_{j})+\sigma(e_{j})\varepsilon_{j}$, where $b(e_{j})$ and $\sigma(e_{j})$ are deterministic trend and variance, while $\varepsilon_j$ is i.i.d. 
Let $h_j(u)$ be the conditional density of the noise. We obtain, since
$g$ is $C_1$ and $\inf_{e\in\mathcal E}\sigma(e)>0$, 
\begin{eqnarray*}
&&\int\vert h_j(u+x)-h_j(u)\vert du\\
&\leq&\int\frac{1}{\sigma(e_j)}\left\vert g\left(\frac{u+x-b(e_j)}{\sigma(e_j)}\right)-g\left(\frac{u-b(e_j)}{\sigma(e_j)}\right)\right\vert\
f(v)dv\\
&\leq& \frac{\normsup g'}{\inf_{e\in\mathcal E}\sigma^2(e)}\vert x \vert.
\end{eqnarray*}
Now, the sequence $(X_{j})_j$ is bounded and, for every $j$, $\vert g_j\vert\leq C\beta ^j$ for a certain $\beta\in ]0,1[$. Hence the theorem in \cite{Gor} applies, with any $0<\delta<1$: the sequence $K(\frac{e_{j}-e}{h})X_{j-1}$ is conditionally $\alpha$-mixing, with mixing coefficients satisfying 
$$
\alpha^{\overline E}(n)\leq C \left(\beta^{\frac{\delta}{1+\delta}}\right)^n=:C\alpha_1^n\qquad\forall n
$$
where the constant $C$ does not depend on $\overline E$.
\end{proof}
The reader is referred to \cite{Dou} for definitions and properties of mixing
sequences. Hereafter we need to replace inequality (\ref{Hoe}) by the following one, a direct consequence of theorem 6.2 in
\cite{Rio}: 

\begin{Lem}
Let $(V_j)$ be a strong mixing sequence of centered 
random variables such that
$$
\alpha(n)\leq c\alpha^n,\;\forall n \quad\hbox{and}\quad \vert V_j\vert\leq M,\;\forall j
$$ Denote $s_n^2=\sum_{1\leq j,k\leq n}\vert
\cov(V_j,V_k)\vert$. For any $r>1$ and $\lambda>0$,
\begin{equation}\label{expmix}
P\left(\left\vert\sum_{j=1}^n V_j\right\vert>4\lambda\right)\leq 4\left(1+\frac{\lambda^2}{rs_n^2}\right)^{-r/2}+\frac{4Mcn}{\lambda}\alpha^{\frac{\lambda}{Mr}}.
\end{equation}
\end{Lem}

This inequality applies, conditionally to $\overline E$, to
$$
V_j=^t\phi_j K_n\left(e-e_j\right)-\E(^t\phi_j K_n\left(e-e_j\right)), \quad j\geq p+1.
$$
For this sequence $V_j$, the conditional variance $s_n^2$ satisfies
\begin{equation}\label{somcov}
s_n^2=O(nh_n)
\end{equation}
where the $O$ is uniform with respect to $\overline E$.
Indeed,
\begin{displaymath}
\left\{\begin{array}{lll}
\var^{\overline E}(V_j)\leq c h_n&&\\
\vert \cov^{\overline E}(V_j,V_l)\vert\leq h_n^2&\hbox{if}\quad \vert j-l\vert\leq \delta_n\\
\vert\cov^{\overline E}(V_j,V_l)\vert\leq C\alpha_1^{\vert j-l\vert}&\hbox{if}\quad \vert j-l\vert> \delta_n
\end{array}\right.
\end{displaymath}
For the last bound, the reader can refer to \cite{Dou}. The two first ones are
directly obtained.
Taking $\delta_n =1/(h_n\ln n)$ easily leads to (\ref{somcov}).

Now, with $M:=\normsup K esssup_j \vert X_j \vert$,  (\ref{expmix}) leads to
\begin{eqnarray*}
P^{\overline E}\left(\Big\vert \sum_{j=p+1}^n{^t\phi_j}K_n\left(e-e_j\right)-\E(^t\phi_j K_n\left(e-e_j\right))\Big\vert>4\lambda\right)
&\leq&4\left(1+\frac{c\lambda^2}{r n h_n}\right)^{-r/2}\\&+&\frac{4MCn}{\lambda}\alpha_1^{\frac{\lambda}{Mr}},
\end{eqnarray*}
and then, if $\ln n=o(r_n)$
\begin{eqnarray*}
&&P^{\overline E}\left(\Bigg\vert \frac{\sum_{j=p+1}^n{^t\phi_j}K_n\left(e-e_j\right)-\E(^t\phi_j K_n\left(e-e_j\right))}{nh_n}\Bigg\vert>\lambda_0\sqrt\frac{\ln n}{nh_n}\right)\\
&\leq& 4\left(1+\frac{c\lambda_0^2\ln n}{16 r_n}\right)^{-r_n/2}+\frac{16MCn}{\lambda_0\sqrt{nh_n\ln n}}\alpha_1^{\frac{\lambda_0\sqrt{nh_n\ln n}}{4Mr_n}}\\
&\leq& 4e^{-\frac{c\lambda_0^2\ln n}{32}}+\frac{C_1}{\lambda_0}\sqrt{\frac{n}{h_n\ln n}}\alpha_1^{\frac{\lambda_0\sqrt{nh_n\ln n}}{4Mr_n}}.
\end{eqnarray*}
Now,  
if $h_n\sim n^{\beta_1}\ln n^{\beta_2}$ with $\beta_1>-1$, $r_n=(\ln n)^\beta$  we get, for $n$ large enough,
\begin{eqnarray}
&&P^{\overline E}\left(\Bigg\vert \frac{\sum_{j=p+1}^n{^t\phi_j}K_n\left(e-e_j\right)-\E(^t\phi_j K_n\left(e-e_j\right))}{nh_n}\Bigg\vert>\lambda_0\sqrt\frac{\ln n}{nh_n}\right)\nonumber\\
&\leq& 4n^{-c\lambda_0^2}+C_2\frac{n^{\frac{1-\beta_1}{2}}}{(\ln n)^{\frac{1+\beta_2}{2}}}\alpha_1^{\frac{\lambda_0n^{\frac{1+\beta_1}{2}}(\ln n)^{\frac{1+\beta_2}{2}-\beta}}{4M}}\nonumber\\
&\leq& 4n^{-c\lambda_0^2}+C_2n^{\frac{1-\beta_1}{2}+\frac{\lambda_0\ln \alpha_1}{4M}}\label{majcond}.
\end{eqnarray}
Now, the constants in (\ref{majcond}) do not depend on $\overline E$, implying that
\begin{eqnarray*}
&&P\left(\Bigg\vert \frac{\sum_{j=p+1}^n{^t\phi_j}K_n\left(e-e_j\right)-\E(^t\phi_j K_n\left(e-e_j\right))}{nh_n}\Bigg\vert>\lambda_0\sqrt\frac{\ln n}{nh_n}\right)\\
&\leq& 4n^{-c\lambda_0^2}+C_2n^{\frac{1-\beta_1}{2}+\frac{\lambda_0\ln \alpha_1}{4M}}.
\end{eqnarray*}
and it is easy to select  $\lambda_0$ for the r.h.s. to be
the general term of a convergent series. 

So we have proved that, for fixed  $e$, 
\begin{equation*}
\frac{\sum_{j=p+1}^n{^t\phi_j}K_n\left(e-e_j\right)-\E(^t\phi_j K_n\left(e-e_j\right))}{nh_n}=O_{a.s.}\left(\sqrt\frac{\ln n}{nh_n}\right).
\end{equation*}
The same speed is obtained for the $\sup$-norm.
%%%%%%%%%%%%%%%%%%%%%%%%%%%%%%%%%%%%%%

\noindent From (\ref{num}), (\ref{numesp}) and  (\ref{den}) it follows that, with 
$$
\tilde R(e):=\frac{\sum_{j=0}^{T-1}{^t\mu_j}f(e-s_j)}{\sum_{j=0}^{T-1}f(e-s_j)}
$$
\begin{equation}\label{R}
\sup_e\Big\vert R_n(e)-\tilde R(e)\Big\vert=O_{a.s.}\left(\sqrt\frac{\ln n}{nh_n}\right)+O_{a.s.}(h_n^{\gamma})
\end{equation}
implying in turn 
\begin{eqnarray}
A_n&=&n\Sigma_n^{-1}\frac{1}{n}\sum_{l=p+1}^n\phi_l R_n(e_l)\nonumber\\
&=&n\Sigma_n^{-1}\frac{1}{n}\sum_{l=p+1}^n\phi_l(R_n(e_l)-\tilde R(e_l))+n\Sigma_n^{-1}\frac{1}{n}\sum_{l=p+1}^n\phi_l\tilde R(e_l)\nonumber\\
&=&O_{a.s.}\left(\sqrt\frac{\ln n}{nh_n}\right)+O_{a.s.}(h_n^{\gamma})+n\Sigma_n^{-1}\frac{1}{n}\sum_{l=p+1}^n\phi_l\tilde R(e_l).\label{last}
\end{eqnarray}
In (\ref{last}), the last sum is separated into $T$ sums
$$
\frac{1}{n}\sum_{l=p+1}^n\phi_l\tilde R(e_l)=\sum_{l=0}^{T-1}\frac{1}{n}\sum_{kT+l\leq n}^n\phi^{(l)}_k\tilde R(s_l+\eta_{kT+l}),
$$
which almost surely converges to 
\begin{eqnarray*}
\frac{1}{T}\sum_{l=0}^{T-1}\E\left(\phi^{(l)}_0\right)E(\tilde R(s_l+\eta_0))&=&\frac{1}{T}\sum_{l=0}^{T-1}\mu^{(l)}E(\tilde R(s_l+\eta_0))\\
&=&\frac{1}{T}\sum_{l,j=0}^{T-1}\mu^{(l)}{^t\mu^{(j)}}\int\frac{f(v-s_j)f(v-s_l)}{\sum_{i=0}^{T-1}f(v-s_i)}dv
\end{eqnarray*}
Moreover, this convergence rate, being the rate in the law of large numbers for i.i.d sequences, is faster than the first two terms in (\ref{last}). This, together with (\ref{last}) and the almost sure convergence of $n\Sigma_n^{-1}$, leads to the desired result. Lemma \ref{AAPE} is proved.

\end{proof}
Lemma \ref{AAPE}, together with Lemma \ref{supp} below, shows that the passage
(\ref{errtheta}) from step $k-1$ to step $k$ is a fixed point iteration, at least for $n$ large enough. 
\begin{Lem}\label{supp}
There exists $k_0\geq 1$ such that
\begin{equation}\label{kper}
\sup_v\frac{{\normdeux{A^{k_0} v}}}{\normdeux{v}}<1.
\end{equation}
Moreover, $k_0=1$ when $p=1$.
\end{Lem}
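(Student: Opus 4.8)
The plan is to reduce the statement to the single fact that the limit operator $A$ of Lemma~\ref{AAPE} has spectral radius $\rho(A)<1$. Granting this, $A$ is (as shown below) similar to a symmetric matrix, so $\normdeux{A^{k}}\le\kappa\,\rho(A)^{k}$ for some fixed $\kappa$ (equivalently, $\normdeux{A^{k}}^{1/k}\to\rho(A)$), and hence $\normdeux{A^{k_{0}}}<1$, i.e. $\sup_{v}\normdeux{A^{k_{0}}v}/\normdeux{v}<1$, as soon as $k_{0}$ is large enough; this is precisely (\ref{kper}). When $p=1$ the matrix $A$ is a single nonnegative number, so $\normdeux{A}=\rho(A)<1$ and $k_{0}=1$ works already.

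The first step is to put $A$ in a transparent form. Set
\[
c_{lj}:=\int\frac{f(u-s_{l})\,f(u-s_{j})}{\sum_{i=0}^{T-1}f(u-s_{i})}\,du,\qquad
C:=\bigl(c_{lj}\bigr)_{0\le l,j\le T-1},\qquad
\mathcal M:=\bigl[\,\mu^{(0)},\ldots,\mu^{(T-1)}\,\bigr].
\]
Reading off the law-of-large-numbers limit computed inside the proof of Lemma~\ref{AAPE} together with Lemma~\ref{cov}, one has $A=\bar M^{-1}N$, where
\[
\bar M:=\mathcal M\,{}^{t}\mathcal M+\sum_{l=0}^{T-1}\Gamma^{(l)}\succ0\quad(\text{Lemma~\ref{cov}}),\qquad N:=\mathcal M\,C\,{}^{t}\mathcal M.
\]
The crucial properties of $C$ are: it is symmetric; it is entrywise positive and doubly stochastic, because $\sum_{j}c_{lj}=\int f(u-s_{l})\,du=1$; and it is a Gram matrix in $L^{2}(\mathcal E)$, namely $c_{lj}=\langle h_{l},h_{j}\rangle$ with $h_{l}:=f(\cdot-s_{l})\bigl(\sum_{i}f(\cdot-s_{i})\bigr)^{-1/2}$, hence positive semidefinite. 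Double stochasticity with nonnegative entries forces $\|C\|_{\infty}\le1$ and, by symmetry, $\|C\|_{1}\le1$, so $\|C\|_{2}\le1$; together with semidefiniteness this gives $0\preceq C\preceq I$. In particular $N\succeq0$ and $\mathcal M(I-C)\,{}^{t}\mathcal M\succeq0$.

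The decisive step --- and the only place where the model hypotheses genuinely enter --- is the strict inequality $\bar M-N\succ0$. Using $\Gamma^{(l)}=\Gamma'^{(l)}+\E\bigl(\psi_{0}^{(l)}\,{}^{t}\psi_{0}^{(l)}\bigr)$ with $\Gamma'^{(l)}\succeq0$,
\[
\bar M-N=\mathcal M(I-C)\,{}^{t}\mathcal M+\sum_{l=0}^{T-1}\Gamma^{(l)}\ \succeq\ \sum_{l=0}^{T-1}\E\bigl(\psi_{0}^{(l)}\,{}^{t}\psi_{0}^{(l)}\bigr)\ \succ\ 0,
\]
the last strict inequality being exactly the non-degeneracy established inside the proof of Lemma~\ref{cov} (the ARMA representation of $S_{k}=Y_{k}+\cdots+Y_{k-T+1}$, whose noise variance is bounded below because $\inf_{\mathcal E}\sigma^{2}>0$). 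Now $A=\bar M^{-1}N$ is similar to the symmetric matrix $\widetilde A:=\bar M^{-1/2}N\bar M^{-1/2}$, so $\rho(A)=\lambda_{\max}(\widetilde A)$; and $\widetilde A\succeq0$ while $I-\widetilde A=\bar M^{-1/2}(\bar M-N)\bar M^{-1/2}\succ0$, so every eigenvalue of $\widetilde A$ lies in $[0,1)$. Hence $\rho(A)<1$, and one concludes as in the first paragraph.

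I expect the only real obstacle to be this last strict inequality: with merely $\bar M-N\succeq0$ the argument would give $\rho(A)\le1$, which does not make the recursion (\ref{errtheta}) a contraction. The strict gap expresses the absence of exact ``concurvity'' between the autoregressive vector $\phi_{k}$ and the functional term $b(e_{k})$; as the Remark following Corollary~\ref{rresum} shows, it can be arbitrarily small (so $\rho(A)$ may approach $1$), yet it is always positive thanks to the input-noise non-degeneracy already secured in Lemma~\ref{cov}. Everything else --- the bounds $0\preceq C\preceq I$, the congruence reduction to $\widetilde A$, and the passage from $\rho(A)<1$ to (\ref{kper}) --- is routine finite-dimensional linear algebra.
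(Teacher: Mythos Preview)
Your argument is correct and takes a genuinely different, more conceptual route than the paper. The paper fixes $T=2$, expands $A$ via Sherman--Morrison as a sum $\alpha_{00}S_{00}+\alpha_{11}S_{11}+\alpha_{01}(S_{01}+S_{10})$ of rank-one matrices, checks that each $S_{jk}$ satisfies $S_{jk}^{2}=\beta_{jk}S_{jk}$ with $\lvert\beta_{jk}\rvert<1$, and then tracks how the coefficients in this four-term decomposition shrink under iteration. You instead work for arbitrary $T$, write $A=\bar M^{-1}N$ with $N=\mathcal M\,C\,{}^{t}\mathcal M$, exploit that $C$ is a doubly-stochastic Gram matrix to get $0\preceq C\preceq I$, and reduce the whole question to $\bar M-N\succ0$, which you derive from the same ARMA non-degeneracy of $\sum_{l}\E(\psi_{0}^{(l)}{}^{t}\psi_{0}^{(l)})$ already proved inside Lemma~\ref{cov}. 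The congruence $A\sim\bar M^{-1/2}N\bar M^{-1/2}$ then gives $\rho(A)<1$ directly. Your approach buys generality in $T$, a transparent link to Lemma~\ref{cov}, and avoids the somewhat delicate bookkeeping of cross-products $S_{jk}S_{j'k'}$ implicit in the paper's iteration; the paper's approach, on the other hand, yields explicit scalar contraction factors $\beta_{jk}$ that make the rate in Remark~\ref{p�le} visible by hand.

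One small inaccuracy: $C$ need not be \emph{entrywise positive} (if the supports of $f(\cdot-s_{l})$ and $f(\cdot-s_{j})$ are disjoint then $c_{lj}=0$), but you only use nonnegativity and row-sum $1$, so nothing is affected.
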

%%%%%%%%%%%%%%%%%%%%%%%%%%%%%%%%%%%%%%%%%%%%%%%%%%%%
\begin{proof}
For the sake of simplicity, we take $T=2$. The general case only brings more complicated formulas.
Denoting 
$$
S=\Gamma^{(1)}+\Gamma^{(2)},
$$
and 
\begin{equation*}
\alpha_{jl}=\int\frac{f(v-s_j)f(v-s_l)}{\sum_{i=0}^{1}f(v-s_i)}dv,
\end{equation*}
\begin{eqnarray}\label{0}
A=\left[S+\mu_0^t\mu_0+\mu_1^t\mu_1\right]^{-1}\left(\alpha_{00}\mu_0^t\mu_0+\alpha_{11}\mu_1^t\mu_1+\alpha_{01}(\mu_0^t\mu_1+\mu_1^t\mu_0)\right).
\end{eqnarray}
We then apply a popular matrix inversion formula:
\begin{eqnarray*}
\left[S+\mu_0^t\mu_0+\mu_1^t\mu_1\right]^{-1}\mu_1&=&\frac{[S+\mu_0^t\mu_0]^{-1}\mu_1}{1+^t\mu_1[S+\mu_0^t\mu_0]^{-1}\mu_1}=\frac{S_{1}^{-1}\mu_1}{1+^t\mu_1M_{1}^{-1}\mu_1}\\
\left[S+\mu_0^t\mu_0+\mu_1^t\mu_1\right]^{-1}\mu_0&=&\frac{[S+\mu_1^t\mu_1]^{-1}\mu_0}{1+^t\mu_0[S+\mu_1^t\mu_0]^{-1}\mu_1}=\frac{S_{0}^{-1}\mu_0}{1+^t\mu_0S_{0}^{-1}\mu_0}
\end{eqnarray*}
where
$$
S_1=S+\mu_0^t\mu_0\quad\hbox{et}\quad S_0=S+\mu_1^t\mu_1.
$$
This leads to
$$
A=\frac{S_{0}^{-1}\mu_0}{1+{^t\mu_0}S_{0}^{-1}\mu_0}\left(\alpha_{00}{^t\mu_0}+\alpha_{01}{^t\mu_1}\right)+\frac{S_{1}^{-1}\mu_1}{1+{^t\mu_1}S_{1}^{-1}\mu_1}
\left(\alpha_{11}{^t\mu_1}+\alpha_{01}{^t\mu_0}\right)
$$
%ce qui, en notant 
%$$
%v_{00}=^t\mu_0M^{-1}\mu_0,\quad v_{11}=^t\mu_1M^{-1}\mu_1,\quad v_{01}=^t\mu_0M^{-1}\mu_1
%$$
%conduit à
%\begin{eqnarray}
%&&\left[M+\mu_0^t\mu_0+\mu_1^t\mu_1\right]^{-1}\mu_1=M^{-1}\mu_1\label{1}
%\frac{1+v_{00}-v_{01}}{(1+v_{00})(1+v_{11})-v_{01}^2}\\
%&&\left[M+\mu_0^t\mu_0+\mu_1^t\mu_1\right]^{-1}\mu_0=M^{-1}\mu_0\label{2}
%\frac{1+v_{11}-v_{01}}{(1+v_{00})(1+v_{11})-v_{01}^2}
%\end{eqnarray}
%où la deuxième ligne est obtenue à partir de la première en permutant $0$ et
%$1$.
and finally to 
\begin{eqnarray}
A&=&\alpha_{00}\frac{S_{0}^{-1}\mu_0^t\mu_0}{1+{^t\mu_0}S_{0}^{-1}\mu_0}+\alpha_{11}\frac{S_{1}^{-1}\mu_1^t\mu_1}{1+{^t\mu_1}S_{1}^{-1}\mu_1}\nonumber\\
&+&\alpha_{01}\left(\frac{S_{0}^{-1}\mu_0^t\mu_1}{1+{^t\mu_0}S_{0}^{-1}\mu_0}+\frac{S_{1}^{-1}\mu_1^t\mu_0}{1+{^t\mu_1}S_{1}^{-1}\mu_1}
\right)\nonumber\\
&=&\alpha_{00}S_{00}+\alpha_{11}S_{11}+\alpha_{01}(S_{01}+S_{10})\label{A}
\end{eqnarray}
where the last line defines the $S_{jl}$'s.

It is easily checked that
\begin{eqnarray*}
S_{jj}^2&=&\frac{{^t\mu_j}S_{j}^{-1}\mu_j}{1+{^t\mu_j}S_{j}^{-1}\mu_j}S_{jj}=\beta_{jj}S_{jj},\quad j=1,2\\
S_{jk}^2&=&\frac{{^t\mu_k}S_{j}^{-1}\mu_j}{1+{^t\mu_j}S_{j}^{-1}\mu_j}S_{jk}=\beta_{jk}S_{jk}\quad j\neq k
\end{eqnarray*}
Clearly, $0\leq \beta_{jj}<1$. Moreover,
$$
\vert \beta_{jk}\vert\leq\frac{\sqrt{{^t\mu_j}S_{j}^{-1}\mu_j}}{1+{^t\mu_j}S_{j}^{-1}\mu_j}\sqrt{{^t\mu_k}S_{j}^{-1}\mu_k}<1
$$
because the first factor is less than $1/2$, and 
\begin{eqnarray*}
{^t\mu_k}S_{j}^{-1}\mu_k&=&{^t\mu_k}[S+\mu_k^t\mu_k]^{-1}\mu_k={^t\mu_k}\left(S^{-1}-\frac{S^{-1}\mu_k^t\mu_kS^{-1}}{1+^t\mu_kS^{-1}\mu_k}\right)\mu_k\\
&=&\frac{^t\mu_kM^{-1}\mu_k}{1+^t\mu_kM^{-1}\mu_k}<1.
\end{eqnarray*}
 
As $\alpha_{jl}\in [0,1]$ for every $j,l$, it results that 
$$
A^2=\alpha^{(2)}_{00}S_{00}+\alpha^{(2)}_{11}S_{11}+\alpha^{(2)}_{01}(S_{01}+S_{10})
$$
where for every $j,l$, $\vert\alpha^{(2)}_{j,l}\vert\leq \beta_{jl}\alpha_{j,l}$, whence
$$
A^k=\alpha^{(k)}_{00}M_{00}+\alpha^{(k)}_{11}M_{11}+\alpha^{(k)}_{01}(M_{01}+M_{10})
$$
where for every $j,l$, $\vert\alpha^{(k)}_{j,l}\vert\leq (\beta_{jl})^{k-1}\alpha_{j,l}$. Lemma \ref{supp} is proved.
\end{proof}
It remains to prove (\ref{variance}), the rate of convergence of the error on the standard deviation.
The estimation error $\tilde \sigma_{n,k}(e)$ is
\begin{eqnarray}
\tilde \sigma_{n,k}(e)&=&\hat \sigma_{n,k}(e)-\sigma^2(e)\nonumber\\
&=&\frac{\sum_{l=p+1}^{n-1}\left(\left(X_l-^t\phi_l\hat\theta^{(k)}_n-\hat b_n^{(k-1)}(e_{l})\right)^2-\sigma^2(e)\right)K_n\left(e-e_l\right)}{\sum_{l=p+1}^{n-1}K_n\left(e-e_l\right)}\nonumber\\
&=&\frac{\sum_{l=p+1}^{n-1}\left(\sigma^2(e_l)\varepsilon_l^2-\sigma^2(e)\right)K_n\left(e-e_l\right)}{\sum_{l=p+1}^{n-1}K_n\left(e-e_l\right)}+R_{n,k}(e)\label{sigma1}
\end{eqnarray}
where, from the first part of the theorem, 
$$
R_{n,k}(e)=O_{a.s.} \left(\sqrt \frac{\ln
 n}{nh_n}\right)+O_{a.s.}(h_n^\gamma)+O_{a.s.}(\beta^{k}).
$$
Now, since the variables $\sigma^2(e_l)\varepsilon_l^2-\sigma^2(e)$ are independent and centered,  the first term in (\ref{sigma1}) can be treated exactly as was (\ref{b}), leading to 
$$
\frac{\sum_{l=p+1}^{n-1}\left(\sigma^2(e_l)\varepsilon_l^2-\sigma^2(e)\right)K_n\left(e-e_l\right)}{\sum_{l=p+1}^{n-1}K_n\left(e-e_l\right)}=O_{a.s.} \left(\sqrt \frac{\ln
 n}{nh_n}\right)+O_{a.s.}(h_n^{\gamma'})
$$
and the proof of (\ref{variance}) is completed.

\newpage

\end{document}